\newcommand{\Ad}{\operatorname{Ad}}
\newcommand{\Aut}{\operatorname{Aut}}
\newcommand{\Span}{\operatorname{Span}}
\newcommand{\Sp}{\operatorname{Sp}}
   \theoremstyle{plain}
   \newtheorem{thm}{Theorem}[section]
   \newtheorem{lemma}[thm]{Lemma}  
   \newtheorem{cor}[thm]{Corollary}
   \theoremstyle{definition}
   \newtheorem{example}[thm]{Example}
   \theoremstyle{remark}
   \newtheorem{q}[thm]{Question}
\definecolor{mybgcolor}{gray}{0.8}
\definecolor{myframecolor}{rgb}{.647,.129,.149}
\newmdenv[style=mystyle]{important}
   \numberwithin{equation}{section}
        \date{\today}
\title[Factor types]{The factor type of conservative KMS weights on graph $C^*$-algebras}
\author{Klaus Thomsen}
\date{\today}
\email{matkt@math.au.dk}
\address{Department of Mathematics, Aarhus University, Ny Munkegade, 8000 Aarhus C, Denmark}
\begin{document}

\maketitle


\bigskip

\section{Introduction} 

In certain quantum statistical models the observables are represented by the elements of a $C^*$-algebra and the time-evolution by a one-parameter group of automorphisms on the $C^*$-algebra, \cite{BR}. In such models the equilibrium states are characterized by the KMS-condition, \cite{HWH}, and much work has been devoted to the study of these KMS-states. As a result there are now large classes of $C^*$-algebras and one-parameter groups for which the structure of the equilibrium states is completely understood. An extremal KMS-state, and more generally also an extremal KMS-weight, gives rise in a canonical way to a representation of the $C^*$-algebra which generates a von Neumann algebra factor. Some of the papers on KMS-states have included a determination of this factor for the extremal KMS-states. This is the case in \cite{EFW}, \cite{BC}, \cite{O}, \cite{BF} \cite{IKW}, \cite{N}, \cite{LN}, \cite{Y1}, \cite{LLNSW}, \cite{CPPR}, \cite{KW}, \cite{Th1}, \cite{Th2}, \cite{Th4},\cite{Y2} and \cite{I}. \footnote{Despite some effort to make this a complete list, presumably it is not.} The factor types give a natural way to distinguish between KMS states very similar to the type classification of non-singular transformations in ergodic theory. The purpose with this paper is to determine the factor types for a class of KMS-weights and KMS-states that arise from generalized gauge actions on simple graph $C^*$-algebras. In \cite{Th3} the author studied such KMS-weights and among other things it was shown that they can be naturally divided into three classes depending on the properties of the measure they induce on the unit space of the groupoid underlying the $C^*$-algebra. For the KMS-weights we consider here the measure is concentrated on the space of infinite paths in the graph and it is conservative with respect to the action of the shift. We call them therefore conservative KMS-weights. We determine their factor type when the graph $C^*$-algebra is simple by calculating the $\Gamma$-invariant of Connes for the associated factors. As shown in \cite{Th3} there is a bijective correspondence between the rays of KMS-weights and the KMS-states on a corner defined by a vertex in the graph, at least for the cases we consider in this paper, and we show therefore first that the $\Gamma$-invariant for the factor defined by an extremal KMS-weight is the same as the $\Gamma$-invariant for the factor obtained from the corresponding KMS-state on the corner. This is not surprising, but it is helpful because it shows that our calculation of the 
$\Gamma$-invariant covers two cases.

\emph{Acknowledgement} I am grateful to Johannes Christensen for discussions and help to eliminate mistakes. The work was supported by the DFF-Research Project 2 `Automorphisms and Invariants of Operator Algebras', no. 7014-00145B.

\section{The setting}

\subsection{Factors from KMS-weights}

Let $A$ be a $C^*$-algebra and $A_{+}$ the convex cone of positive
elements in $A$. A \emph{weight} on $A$ is a map $\psi : A_{+} \to
[0,\infty]$ with the properties that $\psi(a+b) = \psi(a) + \psi(b)$
and $\psi(\lambda a) = \lambda \psi(a)$ for all $a, b \in A_{+}$ and all
$\lambda \in \mathbb R, \ \lambda > 0$. By definition $\psi$ is
\emph{densely defined} when $\left\{ a\in A_{+} : \ \psi(a) <
  \infty\right\}$ is dense in $A_{+}$ and \emph{lower semi-continuous}
when $\left\{ a \in A_{+} : \ \psi(a) \leq \alpha \right\}$ is closed
for all $\alpha \geq 0$. We will use \cite{Ku}, \cite{KV1} and \cite{KV2} as our source of information on weights, and as in \cite{KV2} we say that a weight is \emph{proper}
when it is non-zero, densely defined and lower semi-continuous. 

Let $\psi$ be a proper weight on $A$. Set $\mathcal N_{\psi} = \left\{ a \in A: \ \psi(a^*a) < \infty
\right\}$ and note that 
\begin{equation*}\label{f3}
\mathcal N_{\psi}^*\mathcal N_{\psi} = \Span \left\{ a^*b : \ a,b \in
  \mathcal N_{\psi} \right\}
\end{equation*} 
is a dense
$*$-subalgebra of $A$, and that there is a unique well-defined linear
map $\mathcal N_{\psi}^*\mathcal N_{\psi} \to \mathbb C$ which
extends $\psi : \mathcal N_{\psi}^*\mathcal N_{\psi} \cap A_+ \to
[0,\infty)$. We denote also this densely defined linear map by $\psi$.

Let $\alpha : \mathbb R \to \Aut A$ be a point-wise
norm-continuous one-parameter group of automorphisms on
$A$. Let $\beta \in \mathbb R$. Following \cite{C} we say that a proper weight
$\psi$ on $A$ is a \emph{$\beta$-KMS
  weight} for $\alpha$ when
\begin{enumerate}
\item[i)] $\psi \circ \alpha_t = \psi$ for all $t \in \mathbb R$, and
\item[ii)] for every pair $a,b \in \mathcal N_{\psi} \cap \mathcal
  N_{\psi}^*$ there is a continuous and bounded function $F$ defined on
  the closed strip $D_{\beta}$ in $\mathbb C$ consisting of the numbers $z \in \mathbb C$
  whose imaginary part lies between $0$ and $\beta$, and is
  holomorphic in the interior of the strip and satisfies that
$$
F(t) = \psi(a\alpha_t(b)), \ F(t+i\beta) = \psi(\alpha_t(b)a)
$$
for all $t \in \mathbb R$. \footnote{Note that we apply the
  definition from \cite{C} for the action $\alpha_{-t}$
  in order to use the same sign convention as in \cite{BR}, for example.}
\end{enumerate}   
A $\beta$-KMS weight $\psi$ with the property that 
$$
\sup \left\{ \psi(a) : \ 0 \leq a \leq 1 \right\} = 1
$$
will be called a \emph{$\beta$-KMS state}. The following is Theorem 2.4 in \cite{Th3}.

\begin{thm}\label{h5}  Let $\alpha : \mathbb R \to \Aut A$ be a point-wise
norm-continuous one-parameter group of automorphisms on a $C^*$-algebra
$A$. Let $p$ be a projection in the fixed point algebra of
$\alpha$ such that $p$ is full in $A$. For all $\beta \in \mathbb R$ the map
$$
\psi \mapsto  \psi(p)^{-1} \psi|_{pAp}
$$
is a bijection between the set of rays of $\beta$-KMS weights for $\alpha$ and
the $\beta$-KMS states for the restriction of $\alpha$ to $pAp$.
\end{thm}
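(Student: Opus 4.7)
The plan is to verify in turn that the map $\psi \mapsto \psi(p)^{-1}\psi|_{pAp}$ is (i) well-defined, (ii) injective on rays, and (iii) surjective. The underlying idea throughout is to use fullness of $p$ together with the KMS twist identity (for $x$ analytic, $\psi(xy) = \psi(y\alpha_{i\beta}(x))$) to trade freely between weights on $A$ and states on the corner $pAp$.

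For (i), the key point is $\psi(p) \in (0,\infty)$. Finiteness follows from density of $\mathcal N_\psi$: picking a positive $c$ with $\psi(c) < \infty$, one uses fullness of $p$ to dominate $c$ by a finite sum $\sum x_i^* p x_i$, and the KMS twist then bounds $\psi(p)$ in terms of $\psi(c)$. Positivity $\psi(p)>0$ is by contradiction: if $\psi(p)=0$, the Cauchy--Schwarz inequality for proper weights combined with the twist forces $\psi$ to vanish on the dense subset $\Span(ApA)=A$, contradicting $\psi \neq 0$. Once $\psi(p) \in (0,\infty)$, $\alpha$-invariance of $p$ makes $\psi|_{pAp}$ a bounded, positive, $\alpha$-invariant functional on the corner, and the KMS condition on $A$ restricts verbatim to $pAp$; normalization produces the $\beta$-KMS state, and constancy on rays is immediate from the definition.

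For (ii), given weights $\psi_1,\psi_2$ with equal normalized restriction, scale to $\psi_1(p)=\psi_2(p)$. Using analytic $x,y$ and the twist, $\psi_i(xpay)$ can be rewritten as $\psi_i$ evaluated on an element sitting inside $pAp$, so $\psi_1$ and $\psi_2$ agree on all such products. Fullness of $p$ renders finite sums of products $xpay$ dense in $A$, and lower semi-continuity then upgrades this to $\psi_1 = \psi_2$ on all of $A$.

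Surjectivity (iii) is the principal obstacle. Given a $\beta$-KMS state $\varphi$ on $pAp$, one must construct a proper KMS weight $\psi$ on $A$ whose normalized restriction recovers $\varphi$. The natural route goes through the GNS triple $(\pi_\varphi,H_\varphi,\xi_\varphi)$ of $\varphi$: Tomita--Takesaki modular theory provides a $\sigma$-weakly continuous extension of $\alpha|_{pAp}$ to $\pi_\varphi(pAp)''$; fullness of $p$ allows one to induce $\pi_\varphi$ to a representation $\tilde\pi$ of $A$ on a larger Hilbert space; and the dual weight of $\varphi$ extends to a normal KMS weight on $\tilde\pi(A)''$ whose pullback yields $\psi$. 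The hard part is to check that the resulting $\psi$ is proper (densely defined and lower semi-continuous) and satisfies the $\beta$-KMS condition globally on $A$, not merely on $pAp$; this requires careful use of the Kustermans--Vaes weight formalism together with the analytic structure of $\alpha$. Once this is established, the identity $\psi(p)^{-1}\psi|_{pAp}=\varphi$ drops out of the construction.
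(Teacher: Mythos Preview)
The paper does not prove this statement: it is quoted as Theorem~2.4 of \cite{Th3} and used as a black box, so there is no in-paper argument against which to compare your sketch.

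On the sketch itself, the three-step layout (well-defined, injective, surjective) is the natural one and matches how the result is organised in \cite{Th3}. Two points deserve comment. First, your argument for $\psi(p)<\infty$ in step~(i) runs the wrong way: dominating a positive $c$ with $\psi(c)<\infty$ from above by $\sum x_i^*px_i$ yields an upper bound for $\psi(c)$, not for $\psi(p)$, and fullness of $p$ is not the relevant ingredient for finiteness (it is what forces $\psi(p)>0$ and drives injectivity and surjectivity). A correct route uses only that $\psi$ is densely defined together with the KMS identity: choose an $\alpha$-analytic $c\ge 0$ with $\psi(c)<\infty$ and $\|p-c\|<\tfrac12$, so that $p\le 2\,pcp$; writing $pcp=(c^{1/2}p)^*(c^{1/2}p)$ and using $\psi(a^*a)=\psi\bigl(\alpha_{-i\beta/2}(a)\alpha_{-i\beta/2}(a)^*\bigr)$ with $a=c^{1/2}p$ (recall $\alpha_z(p)=p$) gives $\psi(pcp)\le\psi(c)<\infty$.

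Second, in step~(iii) you correctly isolate the construction of the extending weight as the substantive issue and point to an induction/dual-weight mechanism, but you stop before verifying that the resulting $\psi$ is proper and satisfies the $\beta$-KMS condition on all of $A$. That verification is exactly where the work lies in \cite{Th3}; as written, your proposal is an outline of the strategy rather than a proof of surjectivity.
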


Given a proper weight $\psi$ on a $C^*$-algebra $A$ there is a GNS-type
construction consisting of a Hilbert space $H_{\psi}$, a linear map
$\Lambda_{\psi} : \mathcal N_{\psi} \to H_{\psi}$ with dense range and
a non-degenerate representation $\pi_{\psi}$ of $A$ on $H_{\psi}$ such that
\begin{enumerate}
\item[$\bullet$] $\psi(b^*a) = \left<
    \Lambda_{\psi}(a),\Lambda_{\psi}(b)\right>, \ a,b \in \mathcal
  N_{\psi}$, and
\item[$\bullet$] $\pi_{\psi}(a)\Lambda_{\psi}(b) = \Lambda_{\psi}(ab), \
  a \in A, \ b \in \mathcal N_{\psi}$,
\end{enumerate} 
cf. \cite{Ku}, \cite{KV1}, \cite{KV2}.
A $\beta$-KMS weight $\psi$ on $A$ is \emph{extremal} when the only
$\beta$-KMS weights $\varphi$ on $A$ with the property that
$\varphi(a) \leq \psi(a)$ for all $a \in A_+$ are scalar
multiples of $\psi$, viz. $\varphi = s\psi$ for some $s > 0$.

The following is Lemma 4.9 in \cite{Th1}.

\begin{lemma}\label{k20} Let $A$ be a separable $C^*$-algebra and
  $\alpha$ a pointwise norm-continuous one-parameter group of automorphisms on
  $A$. Let $\psi$ be an extremal $\beta$-KMS weight for $\alpha$. Then
  $\pi_{\psi}(A)''$ is a factor.
\end{lemma}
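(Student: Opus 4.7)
The plan is to rule out nontrivial elements in $Z(\pi_\psi(A)'')$ by showing that any such element would induce a $\beta$-KMS weight decomposition of $\psi$ incompatible with extremality. Write $M = \pi_\psi(A)''$ with center $Z(M)$. First I would extend $\psi$ through the GNS data to a normal semifinite faithful weight $\tilde\psi$ on $M$ with $\tilde\psi \circ \pi_\psi = \psi$ on $\mathcal N_\psi^*\mathcal N_\psi \cap A_+$, using separability of $A$ to stay within countably decomposable objects.

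The KMS condition of $\psi$ with respect to $\alpha$ forces the modular automorphism group $\sigma^{\tilde\psi}$ of $\tilde\psi$ to implement a rescaling of $\alpha$ through $\pi_\psi$ (trivially when $\beta = 0$, where $\psi$ is a trace and the conclusion is classical). Since the modular group of any normal semifinite weight fixes the center pointwise, every $T \in Z(M)$ lies in the centralizer of $\tilde\psi$. For $T \in Z(M)$ with $0 \le T \le 1$ I would set $\psi_T(a) := \tilde\psi(T\pi_\psi(a))$ for $a \in A_+$; centrality of $T$ together with its membership in the centralizer make $\psi_T$ a well-defined proper weight on $A$, while $T \le 1$ gives $\psi_T \le \psi$.

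The critical step is to verify that $\psi_T$ is a $\beta$-KMS weight for $\alpha$. Its $\alpha$-invariance follows from $\sigma^{\tilde\psi}_t(T) = T$ together with $\alpha$-invariance of $\tilde\psi$, and the holomorphic boundary condition will be obtained by inserting the central factor $T$ inside the bounded holomorphic extensions supplied by the KMS condition for $\psi$; this uses separability of $A$ to provide a dense supply of $\alpha$-analytic elements in $\mathcal N_\psi \cap \mathcal N_\psi^*$. Extremality then forces $\psi_T = s\psi$ for some $s \in [0,1]$. Unwinding gives $\tilde\psi\bigl((T - s\cdot 1)\pi_\psi(a)\bigr) = 0$ for all $a \in \mathcal N_\psi^*\mathcal N_\psi \cap A_+$, and faithfulness of $\tilde\psi$ on the centralizer combined with the fact that $\pi_\psi(A)$ generates $M$ forces $T = s\cdot 1$. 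Every positive central element bounded by $1$ is therefore a scalar, whence $Z(M) = \mathbb C \cdot 1$ and $M$ is a factor.

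I expect the principal obstacle to be the verification of the KMS condition for $\psi_T$ at the $C^*$-algebra level rather than merely for the twisted weight $\tilde\psi(T\,\cdot\,)$ at the von Neumann algebra level: one must ensure that the bounded holomorphic extensions required in condition (ii) of the KMS-weight definition persist after insertion of the central element $T$, which calls for careful approximation by $\alpha$-analytic elements instead of the purely algebraic manipulations available on $M$.
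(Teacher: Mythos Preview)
The paper does not give its own proof of this lemma; it simply records that the statement is Lemma~4.9 in \cite{Th1}. Your proposal therefore cannot be compared against a proof in the present paper, but it is the standard argument and is correct in outline: cut down by a central element, show the resulting weight is again $\beta$-KMS and dominated by $\psi$, invoke extremality, and deduce the center is trivial. This is essentially the argument one finds for KMS states in \cite{BR} (Theorem~5.3.30), transported to the weight setting via the Kustermans--Vaes machinery \cite{KV1,KV2}, and it is very likely also the route taken in \cite{Th1}.

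One comment on the point you flag as the principal obstacle. You can sidestep the $C^*$-level verification of condition~(ii) for $\psi_T$ almost entirely by working on $M$ first. Since $T$ lies in the centralizer $M^{\tilde\psi}$ and is positive and bounded, the perturbed weight $\tilde\psi_T := \tilde\psi(T^{1/2}\,\cdot\,T^{1/2}) = \tilde\psi(T\,\cdot\,)$ is normal and semifinite on $M$, and its modular automorphism group coincides with that of $\tilde\psi$, namely $\tilde\alpha_{-\beta t}$ (this is the Pedersen--Takesaki Radon--Nikodym picture for weights). The KMS boundary condition for $\psi_T = \tilde\psi_T\circ\pi_\psi$ with respect to $\alpha$ then drops out immediately from the modular condition for $\tilde\psi_T$ restricted to $\pi_\psi(A)$, with no separate analytic-element approximation needed at the $C^*$-level beyond what is already built into the equivalence of the various formulations of the KMS condition for weights in \cite{Ku,KV1}. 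The final step, passing from $\tilde\psi((T-s)\pi_\psi(a)) = 0$ for all suitable $a$ to $T = s\cdot 1$, is cleanest if you note that $\tilde\psi((T-s)_+\,\cdot\,)$ and $\tilde\psi((T-s)_-\,\cdot\,)$ are normal weights agreeing on $\pi_\psi(A)_+\cap\mathcal M_{\tilde\psi}$ and hence on all of $M_+$ by normality and $\sigma$-weak density; faithfulness of $\tilde\psi$ then forces $(T-s)_+ = (T-s)_-$.
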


It is shown in Section 2.2 of \cite{KV1} that a $\beta$-KMS weight $\psi$ extends to a normal semi-finite faithful weight
$\tilde{\psi}$ on $\pi_{\psi}\left(A\right)''$ such that $\psi =
\tilde{\psi} \circ \pi_{\psi}$, and that the modular group on
$\pi_{\psi}(A)''$ corresponding to $\tilde{\psi}$ is the
one-parameter group $\theta$ on $\pi_{\psi}(A)''$ given by
$$
\theta_t = \tilde{\alpha}_{-\beta t} \ ,
$$
where $\tilde{\alpha}$ is the
$\sigma$-weakly continuous extension of $\alpha$ defined such that
$\tilde{\alpha}_t \circ \pi_{\psi} = \pi_{\psi} \circ \alpha_t$. By construction $\tilde{\alpha}_t = \Ad U_t$, where $U_t \in B(H_{\psi})$ is defined by
$$
U_t \Lambda_{\psi}(a) = \Lambda_{\psi}(\alpha_t(a)) \ .
$$
In the setting of Theorem \ref{h5}, let $(\pi_{\varphi},H_{\varphi}, \xi_{\varphi})$ be the GNS-representation of the state $\varphi$ on $pAp$ defined such that
$$
\varphi(x) = \psi(p)^{-1}\psi(x) \ .
$$ 
The modular automorphism group $\theta'$ on $\pi_{\varphi}(pAp)''$ corresponding to the vector state defined by $\xi_{\varphi}$ is given by
$$
\theta'_t\left(\pi_{\varphi}(pap)\right) = \pi_{\varphi}\left(\alpha_{-\beta t}(pap)\right) =\pi_{\varphi}\left(p\alpha_{-\beta t}(a)p\right) \ .
$$ 
\begin{lemma}\label{07-03-18} In the setting of Theorem \ref{h5} there is an $*$-isomorphism 
\begin{equation}\label{07-03-18a}
\pi_{\varphi}\left(pAp\right)'' \ \simeq \ \pi_{\psi}(p)\pi_{\psi}(A)''\pi_{\psi}(p)
\end{equation}
of von Neumann algebras which is equivariant with respect to $\theta$ and $\theta'$. 
\end{lemma}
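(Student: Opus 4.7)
The plan is to embed $H_\varphi$ into $H_\psi$ via a GNS isometry, to identify $\pi_\psi(pAp)''$ with the corner $\pi_\psi(p)\pi_\psi(A)''\pi_\psi(p)$ on $\pi_\psi(p)H_\psi$, and then to push the isomorphism through using that $\Lambda_\psi(p)$ is a separating vector for that corner. First, since $\varphi$ is a state, $\psi(p) < \infty$, so $p \in \mathcal N_\psi$ and $\xi := \psi(p)^{-1/2}\Lambda_\psi(p)$ is a unit vector in $\pi_\psi(p)H_\psi$. The GNS relations give
$$\langle \pi_\psi(pap)\xi,\xi\rangle \;=\; \psi(p)^{-1}\psi(pap) \;=\; \varphi(pap)$$
for every $pap \in pAp$, so by uniqueness of the GNS construction there is an isometry $V:H_\varphi \to \pi_\psi(p)H_\psi$ with $V\xi_\varphi = \xi$ intertwining $\pi_\varphi$ and $\pi_\psi|_{pAp}$, whose image is the invariant subspace $K := \overline{\pi_\psi(pAp)\xi}$.

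Set $M := \pi_\psi(A)''$. A direct application of Kaplansky's density theorem identifies $\pi_\psi(pAp)''$, computed in $B(\pi_\psi(p)H_\psi)$, with $\pi_\psi(p)M\pi_\psi(p)$: the algebra $\pi_\psi(pAp) = \pi_\psi(p)\pi_\psi(A)\pi_\psi(p)$ is $\sigma$-weakly dense in the corner $\pi_\psi(p)M\pi_\psi(p)$, which is itself a von Neumann algebra on $\pi_\psi(p)H_\psi$. The main obstacle, and the only step requiring genuine input, is to show that $\xi$ is a separating vector for $\pi_\psi(p)M\pi_\psi(p)$. To do so, note that for $m$ in the corner one has $mp = m$, hence $m\xi = \psi(p)^{-1/2}\Lambda_{\tilde\psi}(mp) = \psi(p)^{-1/2}\Lambda_{\tilde\psi}(m)$; therefore $m\xi = 0$ forces $\tilde\psi(m^*m) = 0$, which by faithfulness of the extension $\tilde\psi$ on $M$ yields $m = 0$.

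With $\xi \in K$ separating, the restriction $\rho: \pi_\psi(p)M\pi_\psi(p) \to B(K)$, $m \mapsto m|_K$, is well-defined because $K$ is also $\pi_\psi(pAp)''$-invariant, and becomes an injective normal $*$-homomorphism; normality of $\rho$ together with $\sigma$-weak density of $\pi_\psi(pAp)$ in $\pi_\psi(p)M\pi_\psi(p)$ makes $\pi_\psi(pAp)|_K$ $\sigma$-weakly dense in $\rho(\pi_\psi(p)M\pi_\psi(p))$. Combined with the intertwining property of $V$, this yields
$$\pi_\varphi(pAp)'' \;\cong\; \bigl(\pi_\psi(pAp)|_K\bigr)'' \;=\; \rho\bigl(\pi_\psi(p)M\pi_\psi(p)\bigr) \;\cong\; \pi_\psi(p)M\pi_\psi(p),$$
which is the isomorphism \eqref{07-03-18a}. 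For equivariance I would check on the weakly dense subalgebras: since $p$ is fixed by $\alpha$,
$$\theta_t(\pi_\psi(pap)) = \tilde\alpha_{-\beta t}(\pi_\psi(pap)) = \pi_\psi(p\alpha_{-\beta t}(a)p),$$
which under the isomorphism matches $\theta'_t(\pi_\varphi(pap)) = \pi_\varphi(p\alpha_{-\beta t}(a)p)$, and $\sigma$-weak continuity of $\theta$ and $\theta'$ extends the equivariance to all of $\pi_\psi(p)M\pi_\psi(p)$.
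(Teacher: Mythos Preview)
Your proof is correct and follows essentially the same route as the paper's: both build the GNS intertwiner from $H_\varphi$ onto the cyclic subspace $K=\overline{\pi_\psi(pAp)\Lambda_\psi(p)}$ (the paper calls the projection onto $K$ by $q$), identify the corner $\pi_\psi(p)M\pi_\psi(p)$ with its restriction to $K$ via the separating vector $\Lambda_\psi(p)$, and compose. The one genuine difference is how the separating property is obtained: the paper invokes Corollary~5.3.9 of \cite{BR} (the GNS vector of a KMS state is separating), whereas you deduce it directly from the faithfulness of the normal extension $\tilde\psi$ together with the left-ideal property $m\pi_\psi(p)\in\mathcal N_{\tilde\psi}$, giving $\|m\xi\|^2=\psi(p)^{-1}\tilde\psi(m^*m)$. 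Your argument is slightly more self-contained since it does not appeal to the KMS structure at this step; the paper's citation is shorter but imports more machinery.
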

\begin{proof} Let $q \in B(H_{\psi})$ be the orthogonal projection on $ \overline{\Lambda_{\psi}\left(p Ap\right)}$ and define a unitary 
$W : H_{\varphi} \to qH_{\psi}$
such that 
$$
W\pi_{\varphi}(x)\xi_{\varphi} = \psi\left(p\right)^{-\frac{1}{2}} \Lambda_{\psi}(x) 
$$ 
for $x \in pAp$. Conjugation by $W$ gives an isomorphism $\pi_{\varphi}\left(pAp\right)'' \simeq q \pi_{\psi}(A)''q$. Since $\left(qH_{\psi}, \pi_{\psi}, \frac{\Lambda_{\psi}(p)}{\sqrt{\psi(p)}} \right)$ is the GNS-triple of $\varphi$ it follows from Corollary 5.3.9 in \cite{BR} that $\Lambda_{\psi}(p)$ is separating for $\pi_{\psi}(p)\pi_{\psi}(A)''\pi_{\psi}(p)$. Since $q$ commutes with $\pi_{\psi}(p)\pi_{\psi}(A)''\pi_{\psi}(p)$ and $qH_{\psi}$ contains $\Lambda_{\psi}(p)$, the map
$m \mapsto mq$
is an isomorphism 
$$
\pi_{\psi}(p)\pi_{\psi}(A)''\pi_{\psi}(p) \to q \pi_{\psi}(A)''q \ .
$$ 
We obtain then the isomorphism \eqref{07-03-18a} as the composition of two isomorphisms, both of which are equivariant.
\end{proof}

When $M$ is a $\sigma$-finite von Neumann algebra factor every normal faithful semi-finite weight on $M$ comes together with a modular automorphism group $\theta = (\theta_t)_{t \in \mathbb R}$ and the Connes-invariant $\Gamma(M)$ is the intersection
$$
\Gamma(M) = \bigcap_q \Sp(qMq)  \ ,
$$
where we take the intersection over all projections $q \in M$ fixed by $\theta$, and $\Sp(qMq)$ denotes the Arveson spectrum of the restriction of $\theta$ to $qMq$. In more detail, $\Sp(qMq)$ is defined as follows. For $f \in L^1(\mathbb R)$ define a linear map
$\theta_f : qMq \to qMq$ such that
$$
\theta_f(a) = \int_{\mathbb R} f(t) \theta_t(a) \ dt \ .
$$  
Then
$$
\Sp (qMq) = \bigcap \left\{ Z(f) : \ f \in L^1(\mathbb R), \
  \theta_f(qMq) = \{0\} \right\} \ ,
$$
where
$$
Z(f) = \left\{ r \in \mathbb R : \ \int_{\mathbb R} e^{itr} f(t) \ dt
  \ = \ 0 \right\} \  .
$$
See \cite{Co1}. In particular, when $\psi$ is an extremal $\beta$-KMS weight on $A$ we can calculate $\Gamma\left(\pi_{\psi}(A)''\right)$ by using the automorphism group $\theta_t = \tilde{\alpha}_{-\beta t}$ and we get the following immediate corollary to Lemma \ref{07-03-18}.

\begin{cor}\label{22-03-18} In the setting of Lemma \ref{07-03-18}, 
$$\Gamma\left(\pi_{\varphi}\left(pAp\right)''\right) = \Gamma\left(\pi_{\psi}(A)''\right) \ .
$$
\end{cor}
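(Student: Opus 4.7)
The plan is to combine Lemma \ref{07-03-18} with the standard fact that the Connes $\Gamma$-invariant of a factor is unchanged by passage to a $\theta$-invariant corner of full central support. The equivariant $*$-isomorphism supplied by Lemma \ref{07-03-18} intertwines $\theta'$ with $\theta$, and the $\Gamma$-invariant is, by construction, preserved by any equivariant $*$-isomorphism between von Neumann algebras equipped with their modular automorphism groups, since it is an intersection of Arveson spectra over modular-invariant projections and all of that data transports through the isomorphism. The first step is thus to record
\[
\Gamma\!\left(\pi_\varphi(pAp)''\right) \;=\; \Gamma\!\left(\pi_\psi(p) M \pi_\psi(p)\right), \qquad M := \pi_\psi(A)'',
\]
reducing the corollary to the identity $\Gamma(qMq) = \Gamma(M)$ with $q := \pi_\psi(p)$.

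Next I would verify that $q$ is a nonzero $\theta$-invariant projection in the factor $M$. Factoriality of $M$ is Lemma \ref{k20}; $\theta$-invariance of $q$ follows from the fact that $p$ lies in the fixed point algebra of $\alpha$, together with the formulas $\theta_t = \tilde\alpha_{-\beta t}$ and $\tilde\alpha_t \circ \pi_\psi = \pi_\psi \circ \alpha_t$; and $q \neq 0$ because $\Lambda_\psi(p)$ is a nonzero vector in $qH_\psi$ (by Theorem \ref{h5} one has $\psi(p) \in (0,\infty)$).

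The easy inclusion $\Gamma(qMq) \supseteq \Gamma(M)$ is then immediate from the definition: every nonzero $\theta$-invariant subprojection $q' \leq q$ is also a nonzero $\theta$-invariant projection in $M$, and $q'(qMq)q' = q'Mq'$, so the intersection defining $\Gamma(qMq)$ runs over a subfamily of those defining $\Gamma(M)$. The reverse inclusion is the genuinely nontrivial step and is where the factor hypothesis is essential. For this I would appeal to the standard result of Connes that $\Gamma(qMq) = \Gamma(M)$ whenever $q$ is a $\theta$-invariant projection of central support $1$ in a factor $M$; in our setting $q$ has central support $1$ automatically, since $M$ is a factor and $q \neq 0$. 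The cleanest justification runs through the continuous crossed product: one has $(qMq) \rtimes_\theta \mathbb{R} = q(M \rtimes_\theta \mathbb{R})q$, fullness of $q$ in $M$ persists in the crossed product, and the dual scaling flow on the respective centers is matched by the cut-down map, so $\Gamma$, being determined by the kernel of this flow on the center, is the same for both algebras. This last inclusion is the only real obstacle; everything else is bookkeeping on top of Lemma \ref{07-03-18}.
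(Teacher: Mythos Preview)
Your proof is correct and follows the same route the paper has in mind: the paper offers no explicit argument at all, simply calling the corollary ``immediate'' from Lemma \ref{07-03-18}, so you are just filling in the steps it leaves implicit---namely, that the equivariant isomorphism identifies $\pi_\varphi(pAp)''$ with the corner $\pi_\psi(p)M\pi_\psi(p)$, and that a nonzero corner of a factor has the same $\Gamma$-invariant. One remark: your detour through the continuous crossed product for the inclusion $\Gamma(qMq)\subseteq\Gamma(M)$ is heavier machinery than necessary; the statement that $\Gamma(eMe)=\Gamma(M)$ for any projection $e$ of central support $1$ in a factor $M$ is a direct result in \cite{Co1} (Corollaire 3.2.8), and citing it would match the spirit of the paper's one-line treatment.
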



\subsection{Generalized gauge actions on graph $C^*$-algebras}\label{sec3}

Let $\mathcal G$ be a countable directed graph with vertex set $\mathcal G_V$ and arrow set
$\mathcal G_{Ar}$. \label{GammaV}\label{GammaAr}For an arrow $a \in \mathcal G_{Ar}$ we denote by $s(a) \in \mathcal G_V$ its source and by
$r(a) \in \mathcal G_V$ its range. A vertex $v$ which does not emit any arrow is
a \emph{sink}, while a vertex $v$ which emits infinitely many arrows is called an \emph{infinite
  emitter}. The set of sinks and infinite emitters in  $\mathcal G$ is denoted by $V_{\infty}$. An \emph{infinite path} in $\mathcal G$ is an element
$p  \in \left(\mathcal G_{Ar}\right)^{\mathbb N}$ such that $r(p_i) = s(p_{i+1})$ for all
$i$.  A finite path $\mu = a_1a_2 \cdots a_n = (a_i)_{i=1}^n \in \left(\mathcal G_{Ar}\right)^n$ is
defined similarly. The number of edges in $\mu$ is its \emph{length}
and we denote it by $|\mu|$. A vertex $v \in \mathcal G_V$ will be considered as
a finite path of length $0$. We let $P(\mathcal G)$\label{PGamma} denote the (possibly empty) set of
infinite paths in $\mathcal G$ and $P_f(\mathcal G)$\label{PfGamma} the set of finite paths in $\mathcal G$. The set $P(\mathcal G)$ is a complete metric space when the metric is given by
\begin{equation}\label{25-02-18}
d(p,q) = \sum_{i=1}^{\infty} 2^{-i}\delta(p_i,q_i) \ ,
\end{equation}
where $\delta(a,a) = 0$ and $\delta(a,b) = 1$ when $a \neq b$. We
extend the source map to $P(\mathcal G)$ such that
$s(p) = s(p_1)$ when $p = \left(p_i\right)_{i=1}^{\infty} \in P(\mathcal G)$, and the
range and source maps to $P_f(\mathcal G)$ such that $s(\mu) = s(a_1)$ and $r(\mu)
= r(a_{|\mu|})$ when $|\mu|\geq 1$, and $s(v) = r(v) = v$ when $v\in \mathcal G_V$. Associated to the finite path $\mu \in P_f(\mathcal G)$ is the  cylinder set \label{Zmu}
$$
Z(\mu) = \left\{(p_i)_{i=1}^{\infty} \in P(\mathcal G) : \  
p_j = a_j, \ j = 1,2, \cdots, |\mu|\right\} 
$$
which is an open and closed set in $P(\mathcal G)$. In particular, when $\mu$ has length $0$ and hence is just a vertex $v$,
$$
Z(v) = \left\{ p \in P(\mathcal G) :  \  s(p) =v \right\} \  .
$$

The $C^*$-algebra $C^*(\mathcal G)$ of the graph $\mathcal G$ was introduced in this generality in \cite{BHRS} as the universal
$C^*$-algebra generated by a collection $S_a, a \in \mathcal G_{Ar}$, of partial
isometries and a collection $P_v, v \in \mathcal G_V$, of mutually orthogonal projections subject
to the conditions that
\begin{enumerate}
\item[1)] $S^*_aS_a = P_{r(a)}, \ \forall a \in \mathcal G_{Ar}$,
\item[2)] $S_aS_a^* \leq P_{s(a),} \ \forall a \in \mathcal G_{Ar}$,
\item[3)] $ \sum_{a \in F} S_aS_a^* \leq P_v$ for every finite subset $F \subseteq s^{-1}(v)$ and all $v \in
  \mathcal G_V$, and
\item[4)] $P_v = \sum_{a  \in s^{-1}(v)} S_aS_a^*, \ \forall v \in \mathcal G_V
  \backslash V_{\infty}$.
\end{enumerate} 
For a finite path $\mu = (a_i)_{i=1}^{|\mu|} \in P_f(\mathcal G)$ we set
$$
S_{\mu} = S_{a_1}S_{a_2}S_{a_3} \cdots S_{a_{|\mu|}} \ .
$$
The elements $S_{\mu}S_{\nu}^*, \mu,\nu \in P_f(\mathcal G)$, span a dense $*$-subalgebra $\mathcal A$ in $C^*(\mathcal G)$. The projections
$$
P_{\mu} = S_{\mu}S_{\mu}^*
$$ 
will play an important role in the following.

\begin{lemma}\label{nuclear} $C^*(\mathcal G)$ is a nuclear $C^*$-algebra and $\pi(C^*(\mathcal G))''$ is a hyperfinite von Neumann algebra for all non-degenerate representations $\pi$ of $C^*(\mathcal G)$.
\end{lemma}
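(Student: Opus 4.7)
The plan is to derive both conclusions from realizing $C^*(\mathcal G)$ as the full $C^*$-algebra of an amenable étale groupoid. For nuclearity I would identify $C^*(\mathcal G)$ with $C^*(\mathcal G_{\bullet})$, where $\mathcal G_{\bullet}$ is the graph groupoid whose unit space consists of the infinite paths $P(\mathcal G)$ together with finite paths ending at vertices in $V_{\infty}$, endowed with the usual shift/tail-equivalence structure. For a row-finite graph without singular vertices, $\mathcal G_{\bullet}$ is an AF groupoid in the sense of Renault and therefore amenable. The general case follows from Drinen--Tomforde desingularization, which produces a row-finite graph $\mathcal F$ without singular vertices such that $C^*(\mathcal G)$ is Morita equivalent to $C^*(\mathcal F)$, and nuclearity is preserved under Morita equivalence. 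Amenability of $\mathcal G_{\bullet}$ then gives nuclearity of $C^*(\mathcal G)$ via the Anantharaman-Delaroche--Renault theorem that the full $C^*$-algebra of a Hausdorff amenable étale groupoid is nuclear, and agrees with the reduced one.

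For the second claim I would invoke the general principle that the weak closure of a nuclear $C^*$-algebra in any non-degenerate representation is injective. Concretely, the approximate factorizations of $\id_A$ through matrix algebras by completely positive contractions witnessing nuclearity extend to $B(H)$, and a point-weak$^*$ cluster point yields a norm-one projection of $B(H)$ onto $\pi(A)''$; this is the Choi--Effros--Lance characterization of injectivity. By Connes's fundamental theorem, together with its extensions beyond the factor and separable-predual settings by Elliott, Haagerup and Wassermann, every injective von Neumann algebra is hyperfinite, that is, the weak closure of a directed union of finite-dimensional $*$-subalgebras. Applying this to $A = C^*(\mathcal G)$ yields the second assertion for every non-degenerate representation $\pi$.

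The main technical point is establishing amenability of $\mathcal G_{\bullet}$ for arbitrary countable graphs, since the presence of sinks and infinite emitters obstructs a direct AF-groupoid argument for $\mathcal G$ itself. The Drinen--Tomforde reduction bypasses this by passing to the row-finite, no-singular-vertices setting at the cost of an external Morita equivalence; everything else is a routine invocation of standard results in the structure theory of nuclear $C^*$-algebras and injective von Neumann algebras.
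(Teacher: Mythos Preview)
Your overall strategy matches the paper's: establish nuclearity in the row-finite case, extend to arbitrary graphs via Drinen--Tomforde desingularization and Morita invariance, and then pass from nuclearity to hyperfiniteness through the Choi--Effros/Connes route. The paper cites \cite{KP} for the row-finite step and \cite{DT} for the reduction, and \cite{CE}, \cite{Co2} for the second claim, exactly as you outline.

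There is, however, a genuine error in your argument for the row-finite step. The graph groupoid $\mathcal G_{\bullet}$ is \emph{not} an AF groupoid in Renault's sense, even for row-finite graphs without singular vertices. If it were, its $C^*$-algebra would be AF, but $C^*(\mathcal G)$ is typically purely infinite (e.g.\ the Cuntz algebras $O_n$). What is true is that the kernel $c^{-1}(0)$ of the canonical cocycle $c:\mathcal G_{\bullet}\to\mathbb Z$ (coming from the gauge action) is an AF groupoid; amenability of $\mathcal G_{\bullet}$ then follows because an \'etale groupoid carrying a continuous cocycle into an amenable group with amenable kernel is itself amenable. Equivalently, and this is closer to what \cite{KP} actually do, $C^*(\mathcal G)$ in the row-finite case is a crossed product of an AF algebra by $\mathbb Z$, hence nuclear. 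Once you replace ``$\mathcal G_{\bullet}$ is AF'' by one of these correct statements, your proof goes through and coincides with the paper's.

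A minor presentational point: after reducing to the row-finite case by Morita equivalence you have nuclearity of $C^*(\mathcal G)$ directly, so you need not circle back to assert amenability of the original $\mathcal G_{\bullet}$; that sentence can simply be dropped.
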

\begin{proof} The nuclearity of $C^*(\mathcal G)$ follows from \cite{KP} when $\mathcal G$ is row-finite in the sense that $\# s^{-1}(v)< \infty$ for all $v \in \mathcal G_V$, and the general case follows then from \cite{DT}. The second statement is a wellknown consequence of the first and goes back to \cite{CE} and \cite{Co2}. 
\end{proof}

 We describe next the necessary and sufficient conditions which $\mathcal G$ must satisfy for $C^*(\mathcal G)$ to be simple. These conditions were identified by Szymanski in \cite{Sz}. A \emph{loop} in $\mathcal G$ is a finite path $\mu \in P_f(\mathcal G)$ of positive length such that $r(\mu) = s(\mu)$. We will say that a loop $\mu$ \emph{has an exit} then $\# s^{-1}(v) \geq 2$ for at least one vertex $v$ in $\mu$. A subset $H \subseteq \mathcal G_V$ is \emph{hereditary} when $a \in
\mathcal G_{Ar}, \ s(a) \in H \Rightarrow r(a) \in H$, and \emph{saturated} when 
$$
v \in \mathcal G_V\backslash V_{\infty}, \  
r(s^{-1}(v))  \subseteq H \ \Rightarrow \ v\in H \ .
$$ 
In the following we say that $\mathcal G$ is \emph{cofinal} when the only
non-empty subset of $\mathcal G_V$ which is both hereditary and
saturated is $\mathcal G_V$ itself.

\begin{thm}\label{Sz} (Theorem 12 in \cite{Sz}.) $C^*(\mathcal G)$ is simple if and only if $\mathcal G$ is cofinal and every loop in $\mathcal G$ has an exit.
\end{thm}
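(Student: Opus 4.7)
The plan is to prove the two implications separately, using the structure theory of ideals in graph $C^*$-algebras and the Cuntz--Krieger uniqueness theorem as the main engine.

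For the sufficiency direction, assume $\mathcal G$ is cofinal and every loop has an exit, and let $I \subseteq C^*(\mathcal G)$ be a non-zero closed two-sided ideal. Set $H_I = \{v \in \mathcal G_V : P_v \in I\}$. That $H_I$ is hereditary follows immediately from the identity $P_{r(a)} = S_a^* P_{s(a)} S_a$, and saturation follows from defining relation (4), since $P_v = \sum_{a\in s^{-1}(v)} S_a P_{r(a)} S_a^*$ whenever $v \notin V_\infty$. The crux is to show $H_I$ is non-empty. For this I would invoke the Cuntz--Krieger uniqueness theorem: since every loop has an exit, any $*$-homomorphism from $C^*(\mathcal G)$ which is non-zero on each $P_v$ is injective. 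Because the quotient map $C^*(\mathcal G) \to C^*(\mathcal G)/I$ is not injective, it must kill some $P_v$, so $v \in H_I$. Cofinality then forces $H_I = \mathcal G_V$, so $I$ contains every $P_v$, and since $S_a = P_{s(a)} S_a$, the ideal $I$ contains every generator and equals $C^*(\mathcal G)$.

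For the necessity direction, I would argue the contrapositive in two parts. If $\mathcal G$ is not cofinal, pick a proper non-empty saturated hereditary $H \subsetneq \mathcal G_V$. The closed ideal $I_H$ generated by $\{P_v : v \in H\}$ is non-zero, and by producing a Cuntz--Krieger family for the subgraph with vertex set $\mathcal G_V \setminus H$ (and the arrows with source outside $H$), the universal property of $C^*(\mathcal G)$ gives a $*$-homomorphism $C^*(\mathcal G) \to C^*(\mathcal G_V \setminus H)$ which kills $\{P_v : v\in H\}$ but is non-zero on the complement; this shows $I_H \neq C^*(\mathcal G)$. If instead some loop $\mu$ has no exit, then $S_\mu$ is a unitary in the corner $P_{s(\mu)} C^*(\mathcal G) P_{s(\mu)}$, the $C^*$-subalgebra generated by $S_\mu$ together with the projections along the loop is isomorphic to $M_{|\mu|} \otimes C(\mathbb T)$, and evaluation at a point of $\mathbb T$ produces a non-trivial quotient of that corner; standard full-corner arguments then extend it to a non-trivial quotient of $C^*(\mathcal G)$.

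The main obstacle is the Cuntz--Krieger uniqueness theorem itself. Its proof requires an averaging argument over the gauge action to reduce questions of faithfulness to the core AF-subalgebra, combined with the loop-with-exit hypothesis to separate diagonal projections under any vertex-faithful representation by moving mass out of the loop. Szymanski's contribution in \cite{Sz}, on which the present theorem relies, refines the earlier arguments of Kumjian, Pask and Raeburn to the general graph setting allowing sinks and infinite emitters, and managing those two pathologies simultaneously is the technical heart of the proof.
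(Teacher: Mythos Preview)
The paper gives no proof of this statement; it is quoted verbatim from \cite{Sz}, so there is no in-paper argument to compare against. Your outline is the standard route (and essentially Szyma\'nski's own), and you have correctly isolated the Cuntz--Krieger uniqueness theorem for graphs admitting infinite emitters as the engine of the sufficiency direction.

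There is one small gap in the necessity direction. In the loop-without-exit case your ``standard full-corner argument'' tacitly assumes that $P_{s(\mu)}$ is full in $C^*(\mathcal G)$, which need not hold in general. The remedy is the obvious dichotomy: if $P_{s(\mu)}$ is not full, the ideal it generates is already a non-trivial two-sided ideal and you are done; if it is full, then $P_{s(\mu)}C^*(\mathcal G)P_{s(\mu)}$ is Morita equivalent to $C^*(\mathcal G)$, and since every path from $s(\mu)$ is forced along the loop one checks directly that this corner equals $C^*(S_\mu)\cong C(\mathbb T)$, whose non-simplicity then transfers back. With that adjustment your sketch is correct.
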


A function $F :  \mathcal G_{Ar} \to \mathbb R$ will be called a \emph{potential} on $\mathcal G$. Using it we can define a pointwise norm-continuous one-parameter group $\alpha^F = \left(\alpha^F_t\right)_{t \in \mathbb R}$  on $C^*(\mathcal G)$ such that \label{alphaF}
$$
\alpha^F_t(S_a) = e^{i F(a) t} S_a
$$
for all $a \in \mathcal G_{Ar}$ and 
$$
\alpha^F_t(P_v) = P_v
$$
for all $v \in \mathcal G_V$. An action of this sort is called a \emph{generalized gauge action}; the \emph{gauge action} itself being the one-parameter group corresponding to the constant function $F =1$. To describe the KMS-weights for a generalized gauge action, extend $F$ to a map $F : P_f(\mathcal G) \to \mathbb R$ such that $F(v) = 0$ when $v \in \mathcal G_V$, and 
$$
F(\mu) = \sum_{i=1}^n F(a_i)
$$
when $\mu = (a_i)_{i=1}^n  \in \left(\mathcal G_{Ar}\right)^n$. For $\beta \in \mathbb R$, define the matrix $A(\beta) = \left(A(\beta)_{v,w}\right)_{v,w \in \mathcal G_V}$ over $\mathcal G_V$ such that\label{Abeta}
$$
A(\beta)_{v,w} \ \ = \sum_{ a } e^{-\beta F(a)}  \ 
$$
where we sum over arrows $a \in \mathcal G_{Ar}$ with $s(a) = v$ and $r(a) = w$. As in \cite{Th3} we say that a non-zero non-negative vector $\psi = \left(\psi_v\right)_{v \in \mathcal G_V}$ is \emph{almost $A(\beta)$-harmonic} when 
\begin{itemize}
\item $\sum_{w \in \mathcal G_V} A(\beta)_{v,w} \psi_w \leq \psi_v, \ \forall v \in \mathcal G_V$, and
\item $\sum_{w \in \mathcal G_V} A(\beta)_{v,w} \psi_w = \psi_v, \ \forall v \in \mathcal G_V \backslash V_{\infty}$.
\end{itemize} 
The almost $A(\beta)$-harmonic vectors $\psi$ for which 
\begin{itemize}
\item $\sum_{w \in \mathcal G_V} A(\beta)_{v,w} \psi_w = \psi_v, \ \forall v \in \mathcal G_V,$
\end{itemize}
will be called \emph{$A(\beta)$-harmonic}. In particular, when $\mathcal G$ is row-finite without sinks an almost $A(\beta)$-harmonic vector is automatically $A(\beta)$-harmonic. An almost $A(\beta)$-harmonic vector which is not $A(\beta)$-harmonic will be said to be a \emph{proper almost $A(\beta)$-harmonic vector}. It was shown in \cite{Th3} that an almost $A(\beta)$-harmonic vector $\varphi$ gives rise to a $\beta$-KMS weight $W_{\varphi}$ characterized by the properties that $S_{\mu}^* \in \mathcal N_{W_{\varphi}}$ and
$$
W_{\varphi}\left(S_{\mu}S_{\nu}^*\right) = \begin{cases} 0, & \ \mu \neq \nu \\  e^{-\beta F( \mu)} \varphi_{r(\mu)}, & \ \mu = \nu  \end{cases}
$$
when $\mu, \nu \in P_f(\mathcal G)$, and that all \emph{gauge invariant} KMS-weights arise like this. Generally there can be KMS-weights that are not gauge invariant, but as shown in Proposition 5.6 of \cite{CT} all KMS-weights are gauge invariant when $C^*(\mathcal G)$ is simple. Therefore, in the case that concerns us here, all KMS-weights arise from almost $A(\beta)$-harmonic vectors. Borrowing terminology from harmonic analysis we say that an almost $A(\beta)$-harmonic vector $\varphi$ is \emph{minimal} when it only dominates multiples of itself, i.e. when every almost $A(\beta)$-harmonic vector $\varphi'$ with the property that $\varphi'_v \leq \varphi_v$ for all $v \in \mathcal G_V$ is of the form $\varphi' = \lambda \varphi$ for some $\lambda > 0$. Thus the minimal almost $A(\beta)$-harmonic vectors are those for which the corresponding $\beta$-KMS weight $W_{\varphi}$ is extremal.

The minimal almost $A(\beta)$-harmonic vectors can be subdivided in various ways. Here we shall consider three fundamental classes. The first class consists of the minimal proper almost $A(\beta)$-harmonic vectors. The other two classes consist of the $A(\beta)$-harmonic vectors and are distinguished by the properties of the measures they define on $P(\mathcal G)$. To explain this observe that by Lemma 3.7 in \cite{Th3} an $A(\beta)$-harmonic vector $\varphi$ defines a Borel measure $m_{\varphi}$ on $P(\mathcal G)$ such that
\begin{equation}\label{28-03-18b}
m_{\varphi}(Z(\mu)) = e^{-\beta F(\mu)}\varphi_{r(\mu)}
\end{equation}
for all $\mu \in P_f(\mathcal G)$. The Borel measures $m$ on $P(\mathcal G)$ that arise from $A(\beta)$-harmonic vectors in this way are characterized by the two properties that
\begin{itemize}
\item $m(Z(v)) < \infty$ for all $v \in \mathcal G_V$, and
\item $
m\left(\sigma(B\cap Z(a))\right) = e^{\beta F(a)} m( B \cap
Z(a))$ 
for every edge $a \in \mathcal G_{Ar}$ and every Borel subset $B$ of $P(\mathcal G)$. 
\end{itemize}
Here $\sigma$ denotes the shift on $P(\mathcal G)$, defined such that $\sigma(p)_i = p_{i+1}$. Non-zero Borel measures on $P(\mathcal G)$ with these two properties are called $e^{\beta F}$-conformal, \cite{Th5}. They are the measures that were called harmonic $\beta$-KMS measures in \cite{Th3}. Let $\mathcal G$ be a cofinal graph. As in \cite{Th1} and \cite{Th3} we say that a vertex $v \in \mathcal G_V$ is \emph{non-wandering} when there is a finite path $\mu \in P_f(\mathcal G)$ of positive length such that $v = s(\mu) = r(\mu)$. When the set $NW_{\mathcal G}$ of non-wandering vertexes is not empty it is a hereditary subset of $\mathcal G_V$, and together with the arrows 
$$
NW_{Ar} = \left\{ a \in \mathcal G_{Ar} : \ s(a) \in NW_{\mathcal G} \right\} \ 
$$
they constitute a strongly connected subgraph of $\mathcal G$ which we also denote by $NW_{\mathcal G}$, cf. Proposition 4.3 in \cite{Th3}. Set
$$
P(\mathcal G)_{rec} = \bigcap_{a \in NW_{Ar}} \left\{p \in P(\mathcal G): \ p_i = a \ \text{for infinitely many} \ i \right\} 
$$
and
$$
P(\mathcal G)_{wan} = \bigcap_{v \in \mathcal G_V} \left\{ p \in P(\mathcal G): \ \# \left\{ i \in \mathbb N: \ s(p_i) = v\right\} < \infty  \ \right\}  \ .
$$ 
We say that an $A(\beta)$-harmonic vector $\varphi$ is \emph{conservative} when $m_{\varphi}$ is concentrated on $P(\mathcal G)_{rec}$ and that $\varphi$ is \emph{dissipative} when $m_{\varphi}$ is concentrated on $P(\mathcal G)_{wan}$. When $NW_{\mathcal G}$ is empty, $P(\mathcal G)_{rec} = \emptyset$ and $P(\mathcal G)_{wan} = P(\mathcal G)$, and hence all $A(\beta)$-harmonic vectors are dissipative. To see that we have introduced a genuine dichotomy we need the following. For strongly connected graphs it is Theorem 4.10 in \cite{Th3}.

\begin{thm}\label{21-03-18x} Let $\mathcal G$ be a cofinal digraph such that $NW_{\mathcal G} \neq \emptyset$. Every $e^{\beta F}$-conformal measure $m$ is concentrated either on $P(\mathcal G)_{rec}$ or on $P(\mathcal G)_{wan}$, and
\begin{itemize}
\item $m$ is concentrated on $P(\mathcal G)_{rec}$ if and only if 
$$
\sum_{n=0}^{\infty} A(\beta)^n_{v,v} = \infty
$$
 for one, and hence all $v \in NW_{\mathcal G}$, and
\item $m$ is concentrated on $P(\mathcal G)_{wan}$ if and only if 
$$
\sum_{n=0}^{\infty} A(\beta)^n_{v,v} < \infty
$$ 
for one, and hence all $v \in NW_{\mathcal G}$.
\end{itemize}
\end{thm}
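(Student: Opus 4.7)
The strategy is to reduce the cofinal case to the strongly connected case already handled in Theorem 4.10 of \cite{Th3}, using that $NW_{\mathcal G}$ is a hereditary strongly connected subgraph of $\mathcal G$ (Proposition 4.3 in \cite{Th3}). First I would show that $P_{rep} := P(\mathcal G)\setminus P(\mathcal G)_{wan}$ consists of those paths $p$ some vertex of which is visited infinitely often, and that any such vertex necessarily lies in $NW_{\mathcal G}$: consecutive visits to a vertex $v$ at positions $i<j$ give the loop $p_i p_{i+1} \cdots p_{j-1}$ based at $v$. Because $NW_{\mathcal G}$ is hereditary and $v\in NW_{\mathcal G}$, the tail $\sigma^{i-1}(p)$ lies in $P(NW_{\mathcal G})$, the space of infinite paths of the subgraph.

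Next I would check that the restriction $m'$ of $m$ to the Borel subsets of $P(NW_{\mathcal G})\subseteq P(\mathcal G)$ is an $e^{\beta F}$-conformal measure on the strongly connected graph $NW_{\mathcal G}$. The finiteness condition $m'(Z(v))\leq m(Z(v))<\infty$ for $v\in NW_{\mathcal G}$ is immediate, and the shift-equivariance carries over because every arrow out of a vertex of $NW_{\mathcal G}$ is an arrow of $NW_{\mathcal G}$. Applying Theorem 4.10 of \cite{Th3} to $(NW_{\mathcal G},m')$ then yields the dichotomy for $m'$, governed by the series $\sum_n A(\beta)^n_{v,v}$ for $v\in NW_{\mathcal G}$; note that this series is the same whether computed in $\mathcal G$ or in $NW_{\mathcal G}$, again by heredity.

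Finally I would transfer the conclusion back to $m$. For any finite path $\mu \in P_f(\mathcal G)$ with $r(\mu)\in NW_{\mathcal G}$, iterating the conformal identity gives
\[
m\bigl(Z(\mu)\cap \sigma^{-|\mu|}(B)\bigr) \;=\; e^{-\beta F(\mu)}\, m'(B)
\]
for every Borel $B\subseteq P(NW_{\mathcal G})$. Since every $p\in P_{rep}$ decomposes as a finite prefix $\mu$ (with $r(\mu)\in NW_{\mathcal G}$) followed by a tail in $P(NW_{\mathcal G})$, $P_{rep}$ is a countable union of such translates $Z(\mu)\cap\sigma^{-|\mu|}(P(NW_{\mathcal G}))$. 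If $m'$ is concentrated on $P(NW_{\mathcal G})_{wan}$, the above identity forces $m$-a.e.\ $p\in P_{rep}$ to have a tail that visits every vertex of $NW_{\mathcal G}$ only finitely often, contradicting $p\in P_{rep}$; thus $m(P_{rep})=0$ and $m$ sits on $P(\mathcal G)_{wan}$. If instead $m'$ is concentrated on $P(NW_{\mathcal G})_{rec}$, the same identity shows that $m$-a.e.\ $p\in P_{rep}$ has a tail hitting every arrow of $NW_{Ar}$ infinitely often, so $p\in P(\mathcal G)_{rec}$, and the wandering complement is similarly handled.

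The main obstacle I expect is the bookkeeping for this last transfer: one must simultaneously cover $P_{rep}$ by the countably many shift-translates $Z(\mu)\cap\sigma^{-|\mu|}(P(NW_{\mathcal G}))$ and verify that the conformal shift relation correctly pushes null sets of $m'$ to null sets of $m$. Once that is in place the dichotomy and the series criterion are inherited verbatim from Theorem 4.10 of \cite{Th3}.
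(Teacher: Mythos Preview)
Your approach is the paper's approach: restrict $m$ to $P(NW_{\mathcal G})$, apply Theorem~4.10 of \cite{Th3} to the strongly connected subgraph, and push the dichotomy back via the conformal identity. There is, however, one genuine gap.

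You verify that the restriction $m'$ satisfies the finiteness and shift-equivariance conditions, but you never check that $m'\neq 0$. By the paper's convention an $e^{\beta F}$-conformal measure is \emph{non-zero}, so without this you cannot invoke Theorem~4.10 of \cite{Th3}; and if $m'=0$ your transfer argument only gives $m(P_{rep})=0$, leaving you with $m$ concentrated on $P(\mathcal G)_{wan}$ but no way to tie this to the convergence of $\sum_n A(\beta)^n_{v,v}$. The paper closes this gap with a stronger covering statement than yours: by Proposition~4.3 of \cite{Th3}, \emph{every} infinite path in a cofinal graph with $NW_{\mathcal G}\neq\emptyset$ eventually enters $NW_{\mathcal G}$, so
\[
P(\mathcal G)\;=\;\bigcup_{\mu\in P_f(\mathcal G)} Z(\mu)P(NW_{\mathcal G}),
\]
not merely $P_{rep}$. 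Combined with conformality this forces $m(P(NW_{\mathcal G}))>0$, hence $m'\neq 0$. The same full covering also handles your vague ``wandering complement is similarly handled'' step cleanly: once $m'$ is concentrated on $P(NW_{\mathcal G})_{rec}$, every path (wandering or not) has a tail in $P(NW_{\mathcal G})$, and that tail is $m'$-a.e.\ recurrent, forcing $m(P(\mathcal G)_{wan})=0$ directly. With this one addition your argument is complete and coincides with the paper's.
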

\begin{proof} Consider an $e^{\beta F}$-conformal measure $m$ on $P(\mathcal G)$.
For every $\mu \in P_f(\mathcal G)$, set 
$$
Z(\mu)P(NW_{\mathcal G}) \ = \ \left\{(p_i)_{i=1}^{\infty} \in Z(\mu): \ (p_i)_{i=|\mu|+1}^{\infty} \in  P(NW_{\mathcal G}) \ \right\} \ .
$$
Note that since $m$ is $e^{\beta F}$-conformal,
$$
m\left( Z(\mu)P(NW_{\mathcal G})\right) = e^{-\beta F(\mu)} m\left( 
P(NW_{\mathcal G}) \cap Z(r(\mu))\right) .
$$ 
Combined with the observation that
\begin{equation}\label{21-03-18}
P(\mathcal G) = \bigcup_{\mu \in P_f(\mathcal G)}  Z(\mu)P(NW_{\mathcal G})
\end{equation}
by Proposition 4.3 in \cite{Th3}, it follows that $m(P(NW_{\mathcal G})) \neq 0$. In short, no $e^{\beta F}$-conformal measure annihilates $P(NW_{\mathcal G})$. Therefore all conclusions follow from \eqref{21-03-18} above and Theorem 4.10 in \cite{Th3}. 

\end{proof}

It follows that when $\mathcal G$ is cofinal every minimal $A(\beta)$-harmonic vector is either
 \begin{itemize}
\item a proper almost $A(\beta)$-harmonic vector,
\item a conservative $A(\beta)$-harmonic vector or
\item a dissipative $A(\beta)$-harmonic vector.
\end{itemize}

In this paper we focus on the conservative minimal $A(\beta)$-harmonic vectors. We call the corresponding $\beta$-KMS weights \emph{conservative}. The adjective 'minimal' is superfluous in connection with conservative $\beta$-KMS weights because of the following

\begin{thm}\label{21-03-18c} Assume that $C^*(\mathcal G)$ is simple. There is a conservative $A(\beta)$-harmonic vector if and only if 
\begin{itemize}
\item $NW_{\mathcal G} \neq \emptyset$,
\item $\sum_{n=0}^{\infty} A(\beta)^n_{v,v} = \infty$, and 
\item $\limsup_n \left(A(\beta)^n_{v,v}\right)^{\frac{1}{n}} = 1$
\end{itemize} 
for one and hence all $v \in NW_{\mathcal G}$. When it exists it is unique up to multiplication by scalars and it is minimal.
\end{thm}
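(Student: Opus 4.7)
The plan is to treat necessity first, then sufficiency together with uniqueness, and finally minimality. For necessity, assume $\varphi$ is a conservative $A(\beta)$-harmonic vector; then $m_{\varphi}$ is concentrated on $P(\mathcal G)_{rec}$, so this set is non-empty, which forces $NW_{\mathcal G}\neq \emptyset$, and Theorem \ref{21-03-18x} yields $\sum_n A(\beta)^n_{v,v} = \infty$ for any $v \in NW_{\mathcal G}$. Iterating $A(\beta)\varphi = \varphi$ gives $A(\beta)^n_{v,v}\varphi_v \leq \varphi_v$, hence $A(\beta)^n_{v,v}\leq 1$ for every $n$, so $\limsup_n (A(\beta)^n_{v,v})^{1/n} \leq 1$; the reverse inequality is forced by the divergence of $\sum_n A(\beta)^n_{v,v}$, since strict inequality would give geometric decay and a convergent sum, so $\limsup_n (A(\beta)^n_{v,v})^{1/n} = 1$.

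For sufficiency, I would apply Vere--Jones' theory of irreducible non-negative matrices to the restriction $A(\beta)|_{NW_{\mathcal G}}$, which is irreducible because $NW_{\mathcal G}$ is strongly connected by Proposition 4.3 of \cite{Th3}. The three hypotheses say exactly that this matrix has convergence parameter $R = 1$ and is $R$-recurrent, so Vere--Jones theory furnishes a strictly positive right eigenvector $\psi$ on $NW_{\mathcal G}$ for eigenvalue $1$, unique up to positive scalar. To extend $\psi$ to all of $\mathcal G_V$, I observe that cofinality of $\mathcal G$, combined with the hereditariness and non-emptiness of $NW_{\mathcal G}$, forces the saturation of $NW_{\mathcal G}$ to equal $\mathcal G_V$; since the saturation procedure never adds vertices from $V_\infty$, this in turn entails $V_\infty \subseteq NW_{\mathcal G}$. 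Every $v \notin NW_{\mathcal G}$ is therefore a finite emitter, and the identity $\varphi_v = \sum_{a \in s^{-1}(v)} e^{-\beta F(a)} \varphi_{r(a)}$ inductively determines $\varphi_v$ along the levels of the saturation filtration. The resulting $\varphi$ is $A(\beta)$-harmonic on $\mathcal G_V$, and Theorem \ref{21-03-18x} together with the divergence hypothesis guarantees that $m_{\varphi}$ is concentrated on $P(\mathcal G)_{rec}$, so $\varphi$ is conservative.

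Uniqueness up to positive scalar is inherited from the Vere--Jones uniqueness on $NW_{\mathcal G}$ combined with the inductive uniqueness of the extension. For minimality, let $\varphi'$ be an almost $A(\beta)$-harmonic vector with $\varphi'_v \leq \varphi_v$ for all $v$. Because $V_\infty \subseteq NW_{\mathcal G}$, the restriction $\varphi'|_{NW_{\mathcal G}}$ is super-harmonic for $A(\beta)|_{NW_{\mathcal G}}$, while $\varphi'$ satisfies the strict harmonic identity outside $NW_{\mathcal G}$. Passing through the $h$-transform defined by $\psi$, $R$-recurrence of $A(\beta)|_{NW_{\mathcal G}}$ translates into recurrence of the associated Markov chain, and recurrent irreducible Markov chains have no non-constant non-negative super-harmonic functions; hence $\varphi'/\psi$ is constant on $NW_{\mathcal G}$, so $\varphi'|_{NW_{\mathcal G}} = c\psi$ for some $c \geq 0$. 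If $c = 0$, the inductive extension forces $\varphi' = 0$, contradicting $\varphi' \neq 0$; hence $c > 0$, and $\varphi' = c\varphi$ on all of $\mathcal G_V$ by the extension uniqueness. The main obstacle I anticipate is the careful deployment of $R$-recurrence—in particular the super-harmonic uniqueness via the $h$-transform—together with verifying that the harmonic extension is compatible with infinite emitters in $NW_{\mathcal G}$.
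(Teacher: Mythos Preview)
Your argument is correct. The paper's own proof is essentially a two-line deferral: after noting (as you do) that $NW_{\mathcal G}\neq\emptyset$ is forced, it invokes Theorem~\ref{21-03-18x} together with Proposition~4.9 and Theorem~4.14 of \cite{Th3} for everything else---existence, uniqueness, the $\limsup$ condition, and minimality. Your proposal supplies the content those citations encapsulate: Vere--Jones theory for the $R$-recurrent irreducible matrix $A(\beta)|_{NW_{\mathcal G}}$ to produce the unique positive eigenvector, the saturation/cofinality argument (including the observation $V_\infty\subseteq NW_{\mathcal G}$) to extend it over $\mathcal G_V$, and the $h$-transform reduction to a recurrent stochastic matrix for minimality via the classical ``no non-constant non-negative super-harmonic functions'' principle. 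The two approaches are aligned in substance; you have written out a self-contained version of what the paper outsources. One small gap worth closing: the step from $A(\beta)^n_{v,v}\varphi_v\leq\varphi_v$ to $A(\beta)^n_{v,v}\leq 1$ needs $\varphi_v>0$ for $v\in NW_{\mathcal G}$; this holds because $\varphi_v=0$ at one such $v$ would propagate (via strong connectedness of $NW_{\mathcal G}$ and the harmonic equation) to $\varphi\equiv 0$ on $NW_{\mathcal G}$, and then along the saturation filtration to $\varphi\equiv 0$ everywhere.
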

\begin{proof} As observed above there can not be any conservative $e^{\beta F}$-conformal measure on $P(\mathcal G)$ unless $NW_{\mathcal G} \neq \emptyset$. Therefore all statements follow by combining Theorem \ref{21-03-18x} above with Proposition 4.9 and Theorem 4.14 in \cite{Th3}.
\end{proof}

In particular, when $C^*(\mathcal G)$ is simple the existence of a conservative KMS-weight for $\alpha^F$ depends only on the restriction of $F$ to the strongly connected subgraph $NW_{\mathcal G}$. For many generalized gauge actions there is at most one $\beta$-value for which the three conditions in Theorem \ref{21-03-18c} can be met, and we ask

\begin{q} Is there a strongly connected digraph with a potential function $F$ such that there exists a conservative $\beta$-KMS weight for $\alpha^F$ for more than one value of $\beta$?
\end{q}

When $\mathcal G$ is strongly connected the gauge action on $C^*(\mathcal G)$ has a conservative $\beta$-KMS weight if and only if $\beta$ is the Gurevich entropy of $\mathcal G$ and $\mathcal G$ is recurrent in the sense of Ruette, \cite{Ru}.

\section{The factor type of a conservative $\beta$-KMS weight}

In the setting of Section \ref{sec3}, assume that $NW_{\mathcal G} \neq \emptyset$ and pick a vertex $v$ in $NW_{\mathcal G}$. Then   
$$
 \left\{ \beta F(\mu) - \beta F(\mu'): \ \mu, \mu' \in P_f(\mathcal G), \ r(\mu) =
  r(\mu') = s(\mu) = s(\mu') = v \right\} 
$$
is a subgroup of $\mathbb R$ which does not depend on the vertex $v \in NW_{\mathcal G}$. Let $R_{\mathcal G,F}$ be the closure in $\mathbb
R$ of this subgroup.

\begin{lemma}\label{nov21a} Assume that $\mathcal G$ is cofinal and that $NW_{\mathcal G} \neq \emptyset$. Let $\psi$ be an extremal $\beta$-KMS
  weight for $\alpha^F$. Then $ \pi_{\psi}(C^*(\mathcal G))''$ is a hyperfinite
  factor and
\begin{equation}\label{nov21}
\Gamma\left( \pi_{\psi}(C^*(\mathcal G))''\right) \subseteq R_{\mathcal G,F} \ .
\end{equation}
\end{lemma}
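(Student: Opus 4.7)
The factor and hyperfiniteness assertions are immediate: $C^*(\mathcal{G})$ is separable, so Lemma \ref{k20} makes $\pi_{\psi}(C^*(\mathcal{G}))''$ a factor, and Lemma \ref{nuclear} supplies hyperfiniteness. For the spectral inclusion \eqref{nov21}, the plan is to pass to a corner at a non-wandering vertex $v$, inspect the $\theta$-eigenspaces spanned by elements $S_{\mu}S_{\nu}^*$, and use cofinality to convert each eigenvalue into a difference of $F$-values along loops at $v$. Fix $v \in NW_{\mathcal{G}}$. Cofinality implies that the only non-empty saturated hereditary subset of $\mathcal{G}_V$ is $\mathcal{G}_V$ itself, so the ideal generated by $P_v$ is all of $C^*(\mathcal{G})$ and $P_v$ is full; since $P_v$ is $\alpha^F$-invariant, Theorem \ref{h5} and Corollary \ref{22-03-18} reduce the problem to showing $\Gamma(M) \subseteq R_{\mathcal{G},F}$, where $M = \pi_{\varphi}(P_v C^*(\mathcal{G}) P_v)''$ carries the modular group $\theta'_t(\pi_{\varphi}(x)) = \pi_{\varphi}(\alpha^F_{-\beta t}(x))$ induced by $\varphi = \psi(P_v)^{-1}\psi|_{P_v C^*(\mathcal{G}) P_v}$. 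Since $\Gamma(M) \subseteq \Sp(M)$ by taking $q = 1$, it suffices to establish $\Sp(M) \subseteq R_{\mathcal{G},F}$.

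A dense $*$-subalgebra of $P_v C^*(\mathcal{G}) P_v$ is spanned by the elements $S_{\mu}S_{\nu}^*$ with $s(\mu) = s(\nu) = v$; each such element is a $\theta'$-eigenvector with eigenvalue $-\beta(F(\mu) - F(\nu))$, and the nonzero ones satisfy $r(\mu) = r(\nu)$. The combinatorial core is the observation that every vertex reaches $v$ by a finite path. Indeed, the set $H = \{w \in \mathcal{G}_V : \text{no finite path from } w \text{ to } v\}$ is hereditary (any successor of a non-reacher is a non-reacher) and saturated (if $w \notin V_\infty$ and every edge from $w$ lands in $H$, then $w \neq v$ and no path from $w$ reaches $v$); since $v \notin H$, cofinality forces $H = \emptyset$. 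Given a nonzero $S_{\mu}S_{\nu}^*$ in the corner, choose any $\tau \in P_f(\mathcal{G})$ from $r(\mu) = r(\nu)$ to $v$; then $\mu\tau$ and $\nu\tau$ are loops at $v$ satisfying $F(\mu\tau) - F(\nu\tau) = F(\mu) - F(\nu)$, so the $\theta'$-eigenvalue $-\beta(F(\mu) - F(\nu))$ lies in $R_{\mathcal{G},F}$.

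To conclude, given $r \in \mathbb{R} \setminus R_{\mathcal{G},F}$, I use closedness of $R_{\mathcal{G},F}$ to pick $f \in L^1(\mathbb{R})$ with $\hat{f}$ supported off $R_{\mathcal{G},F}$ but with $\hat{f}(r) \neq 0$. Then $\theta'_f$ annihilates every $\pi_{\varphi}(S_{\mu}S_{\nu}^*)$ in the spanning set, because each such eigenvalue lies in $R_{\mathcal{G},F} \subseteq Z(f)$; by linearity and norm-continuity $\theta'_f$ vanishes on $\pi_{\varphi}(P_v C^*(\mathcal{G}) P_v)$, and by $\sigma$-weak continuity it vanishes on all of $M$. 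This places $r$ outside $Z(f)$ for an $f$ with $\theta'_f(M) = 0$, giving $r \notin \Sp(M)$ and so $\Sp(M) \subseteq R_{\mathcal{G},F}$. The step I expect to require the most care is the passage from vanishing on the dense $*$-subalgebra to vanishing on its $\sigma$-weak closure (which needs the $\sigma$-weak extension of $\alpha^F$ provided after Lemma \ref{k20}); the combinatorial reduction and the eigenvalue calculation are comparatively routine once the corner $P_v C^*(\mathcal{G}) P_v$ has been set up.
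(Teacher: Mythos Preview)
Your proof is correct and follows the same strategy as the paper's: reduce to a corner at a non-wandering vertex $v$, observe that each spanning element $S_\mu S_\nu^*$ is a $\theta$-eigenvector whose eigenvalue lies in $R_{\mathcal G,F}$ once $\mu,\nu$ are extended to loops at $v$, and then exclude points outside $R_{\mathcal G,F}$ with an $L^1$ function whose Fourier transform vanishes on $R_{\mathcal G,F}$. The only differences are in execution: the paper works directly in $\pi_\psi(P_v)M\pi_\psi(P_v)$ (using that $\Gamma(M)\subseteq\Sp(P_vMP_v)$ immediately from the definition, since $P_v$ is $\theta$-fixed) rather than detouring through Theorem~\ref{h5} and Corollary~\ref{22-03-18}, and it obtains the return path to $v$ by noting that $r(\mu)\in NW_{\mathcal G}$ (hereditariness) and invoking the strong connectedness of $NW_{\mathcal G}$, rather than your general cofinality argument.
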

\begin{proof}  $M = \pi_{\psi}(C^*(\mathcal G))''$ is hyperfinite by Lemma \ref{nuclear}. In the following proof we suppress $\pi_{\psi}$ in the notation and consider $C^*(\mathcal G)$ as a subalgebra of $M$. We will show that 
$\mathbb R \backslash R_{\mathcal G,F} \ \subseteq \ \mathbb R \backslash
\Gamma(M)$.
Let therefore $r \in \mathbb R \backslash R_{\mathcal G,F}$ and choose a function $f \in
L^1(\mathbb R)$ whose Fourier transform $\hat{f}$ satisfies that
$\hat{f}(t) = 0$ for all $t \in R_{\mathcal G,F}$ and $\hat{f}(r) \neq
0$. Fix a vertex $v \in NW_{\mathcal G}$ and let $\mu,\nu \in P_f(\mathcal G)$ be finite paths with $s(\mu) = s(\nu) = v$. We assume that $r(\mu) = r(\nu)$ since $S_{\mu}S_{\nu}^*$ is zero otherwise. Since $v $ is wandering and $NW_{\mathcal G}$ is strongly connected there is a finite path $\delta$ in $\mathcal G$ such that $s(\delta) = r(\mu) = r(\nu)$ and $r(\delta) = v$. Then
$$
\beta F(\mu) - \beta F(\nu) = \beta F(\mu\delta) - \beta F(\nu \delta ) \in R_{\mathcal G,F} \ .
$$
It follows that
$$
\theta_f(S_{\mu}S_{\nu}^*) = \int_{\mathbb R} f(t) \theta_t(S_{\mu}S_{\nu}^*) \ dt =
\hat{f}(\beta(F(\mu) - F(\nu))) S_{\mu}S_{\nu}^* = 0 \ 
$$
because $\beta(F(\mu) - F(\nu)) \in R_{\mathcal G,F}$. Since the elements of
the form $S_{\mu}S_{\nu}^*$ for some $\mu, \nu \in P_{f}(\mathcal G)$ with $s(\mu) = s(\nu) = v$ span a strongly dense subspace of $P_vMP_v$ we
conclude that $\theta_f(P_vMP_v) = \{0\}$. Since $\hat{f}(r) \neq 0$
we conclude that $r \notin \Sp(P_vMP_v)$, and hence $r \notin \Gamma(M)$.
\end{proof}

\begin{lemma}\label{28-03-18} Assume $C^*(\mathcal G)$ is simple and that there is a $\beta$-KMS weight for $\alpha^F$. Let $\mu, \nu, \delta$ be finite paths in $\mathcal G$ such that $|\delta| > \max \{|\mu|, |\nu|\}$ and $F(\mu) = F(\nu)$. Then 
$$
P_{\delta}S_{\mu}S_{\nu}^*P_{\delta} = \begin{cases}  0, & \ \text{when} \ \mu \neq \nu \\ P_{\delta} P_{\mu} , & \ \text{when} \ \mu = \nu \ . \end{cases} 
$$ 
\end{lemma}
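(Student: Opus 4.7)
My plan is to do a case analysis on whether $\mu=\nu$ and, when $\mu\neq\nu$, on whether $\mu$ and $\nu$ are prefixes of $\delta$; the real content lies in ruling out a certain loop with $F$-value zero.

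When $\mu = \nu$, a direct calculation gives $P_\delta S_\mu S_\mu^* P_\delta = P_\delta P_\mu P_\delta$, which equals $P_\delta P_\mu$ in both possibilities: either $\mu$ is a prefix of $\delta$ and $P_\delta \leq P_\mu$ makes both sides $P_\delta$, or $\mu$ is not a prefix of $\delta$ and then $\mu,\delta$ are prefix-incompatible (as $|\mu|<|\delta|$), making both sides vanish.

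For $\mu \neq \nu$, the expression is automatically $0$ unless $\mu,\nu$ are both prefixes of $\delta$ and $r(\mu) = r(\nu)$, since otherwise one of $P_\delta S_\mu$, $S_\nu^* P_\delta$, or $S_\mu S_\nu^*$ already vanishes. Under these conditions $|\mu|=|\nu|$ would force $\mu=\nu$, so I may take $|\mu|<|\nu|$; then $\mu$ is a proper prefix of $\nu$, i.e.\ $\nu=\mu\gamma$ with $|\gamma|\geq 1$, and $r(\mu)=r(\nu)$ makes $\gamma$ a loop at $r(\mu)$. The hypothesis $F(\mu)=F(\nu)$ becomes $F(\gamma)=0$, and the task reduces to showing that no such loop can exist when $C^*(\mathcal G)$ is simple and $\alpha^F$ admits a $\beta$-KMS weight.

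This impossibility is the main obstacle. By Proposition~5.6 of \cite{CT}, the given $\beta$-KMS weight comes from an almost $A(\beta)$-harmonic vector $\varphi$. First I would show $\varphi$ has full support on $\mathcal G_V$: the zero set $\{w:\varphi_w=0\}$ is hereditary by non-negativity of $A(\beta)$, and saturated because the almost-harmonic equality at $v\notin V_\infty$ forces $\varphi_v=0$ whenever all successors of $v$ lie in the zero set, so cofinality of $\mathcal G$ rules it out. Writing $v = r(\mu)$ and $m = |\gamma|$, the bound $A(\beta)^m_{v,v}\geq e^{-\beta F(\gamma)}=1$ combined with the iterated inequality $A(\beta)^m\varphi\leq\varphi$ and full support pins down $A(\beta)^m_{v,v}=1$ and $A(\beta)^m_{v,w}=0$ for every $w\neq v$; hence $\gamma$ is the unique length-$m$ path in $\mathcal G$ starting at $v$. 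Applying this analysis at every cyclic shift of $\gamma$ yields the same uniqueness at each vertex on $\gamma$, so any outgoing edge $a$ at such a vertex $u$ other than the corresponding $\gamma$-edge forces $r(a)$ to admit no path of length $m-1$; thus $r(a)$ lies in the set $B$ of vertices all of whose forward paths are finite, and $B$ is hereditary and saturated. Cofinality gives $B\in\{\emptyset,\mathcal G_V\}$, but $B=\emptyset$ contradicts $r(a)\in B$ while $B=\mathcal G_V$ contradicts the infinite path obtained by iterating $\gamma$. Hence $\gamma$ has no exit, contradicting Theorem~\ref{Sz}, which rules out the remaining configuration and gives $P_\delta S_\mu S_\nu^* P_\delta = 0$ in the case $\mu\neq\nu$.
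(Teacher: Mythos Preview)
Your reduction to the impossibility of a loop $\gamma$ with $F(\gamma)=0$ is exactly the paper's strategy: the paper observes that if $P_\delta S_\mu S_\nu^* P_\delta\neq 0$ with $\mu\neq\nu$ then $\mu,\nu$ are both initial segments of $\delta$ with common range, so (after swapping) $\nu=\mu\gamma$ for a loop $\gamma$ with $F(\gamma)=0$, and the case $\mu=\nu$ is handled identically.

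The genuine difference is in how this loop is excluded. The paper simply invokes an external result, Lemma~10.6 of \cite{Th5}, which says that the existence of a $\beta$-KMS weight rules out loops with $F$-value zero. You instead supply a self-contained argument: pass to an almost $A(\beta)$-harmonic vector $\varphi$ via Proposition~5.6 of \cite{CT}, show $\varphi$ has full support using cofinality, and then from $A(\beta)^m\varphi\le\varphi$ together with $A(\beta)^m_{v,v}\ge 1$ deduce that $\gamma$ is the \emph{only} length-$m$ path from $v$, and likewise at every cyclic shift; any exit edge would then land in the hereditary saturated set $B$ of vertices with no infinite forward path, which cofinality forces to be empty or all of $\mathcal G_V$, both impossible. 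This yields a loop without exit, contradicting Theorem~\ref{Sz}. Your argument is correct (the $m=1$ case is slightly degenerate in your phrasing, since ``no path of length $0$'' is vacuously false, but there the contradiction with uniqueness is immediate and no appeal to $B$ is needed). The trade-off is that the paper's route is shorter but relies on \cite{Th5}, while yours stays within references already used in Section~\ref{sec3} and makes the role of simplicity (via cofinality and Szymanski's criterion) explicit.
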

\begin{proof} If $|\mu| \neq |\nu|$ and $S_{\delta}^*S_{\mu}S_{\nu}^*S_{\delta} \neq 0$, the relations defining $C^*(\mathcal G)$ imply that a piece of $\mu$ or a piece of $\nu$ will be a loop $\kappa$ of positive length in $\mathcal G$ such that $F(\kappa) = 0$. By Lemma 10.6 in \cite{Th5} the existence of a $\beta$-KMS weight rules out the existence of such a loop. It follows that $ S_{\delta}^*S_{\mu}S_{\nu}^*S_{\delta}$ can only be non-zero when $|\mu| = |\nu|$. But $S_{\delta}^*S_{\mu} \neq 0$ implies that $\mu$ must be the initial piece of $\delta$ and similarly $S_{\nu}^*S_{\delta} \neq 0$ implies that $\nu$ must also be the initial piece of $\delta$. Therefore $S_{\delta}^*S_{\mu}S_{\nu}^*S_{\delta} \neq 0$ implies that $\mu = \nu$, in which case $P_{\delta}S_{\mu}S_{\nu}^*P_{\delta} = S_{\delta}S_{\delta}^*S_{\mu}S_{\mu}^*S_{\delta}S_{\delta}^*  = P_{\delta}P_{\mu}$.

\end{proof}

\begin{thm}\label{nov9} Assume that $C^*(\mathcal G)$ is simple and $NW_{\mathcal G}$ not empty. Let $\psi$ be a conservative $\beta$-KMS weight for the generalized gauge action $\alpha^F$. Then
$$
\Gamma( \pi_{\psi}(C^*(\mathcal G))'') = R_{\mathcal G,F} \ .
$$ 
\end{thm}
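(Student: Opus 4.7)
The plan is as follows. By Lemma~\ref{nov21a} we have $\Gamma(M) \subseteq R_{\mathcal G,F}$ for $M := \pi_\psi(C^*(\mathcal G))''$, so I only need the reverse inclusion $R_{\mathcal G,F} \subseteq \Gamma(M)$. Fix $r \in R_{\mathcal G,F}$ and a non-zero $\theta$-invariant projection $q \in M$; since $\Sp(qMq)$ is closed, it suffices to show that each number of the form $r_0 = \beta(F(\mu')-F(\mu))$ with $\mu, \mu'$ loops at a fixed $v \in NW_{\mathcal G}$ lies in $\Sp(qMq)$. The case $r_0 = 0$ is trivial because $q$ itself is a $0$-eigenvector of $\theta$, so assume $r_0 \neq 0$. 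The partial isometry $U := S_\mu S_{\mu'}^* \in C^*(\mathcal G) \subseteq M$ then satisfies $UU^* = P_\mu$, $U^*U = P_{\mu'}$ and $\theta_t(U) = e^{itr_0}U$.

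My goal is to produce a non-zero element $X \in qMq$ with $\theta_t(X) = e^{itr_0}X$ of the form $X = WUV$, with $V$ and $W$ partial isometries in the centralizer $M^\theta := \{m \in M : \theta_t(m)=m \ \forall t\}$. Since $V$ and $W$ are $\theta$-invariant, such an $X$ is automatically an $r_0$-eigenvector. Taking $V^*V = e \leq q$, $VV^* = f \leq P_{\mu'}$, $W^*W = UfU^* \leq P_\mu$ and $WW^* = e' \leq q$, the element $X$ is a partial isometry from $e$ onto $e'$ in $qMq$, and it is non-zero as soon as $e \neq 0$. By the KMS identity $\tilde\psi(UfU^*) = e^{-r_0}\tilde\psi(f)$, the existence of $V, W$ reduces to the following Murray--von Neumann comparison problem inside the semifinite von Neumann algebra $M^\theta$: find sub-projections $e, e' \leq q$ in $M^\theta$ with trace ratio $\tilde\psi(e')/\tilde\psi(e) = e^{-r_0}$ together with equivalent sub-projections of $P_{\mu'}$ and $P_\mu$ respectively.

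The main obstacle I anticipate is precisely this equivariant projection matching. Lemma~\ref{28-03-18} is the crucial structural input: the collapse of $P_\delta$-compressions of $\theta$-invariant monomials onto scalar multiples of $P_\delta$ extends, by weak density, to the statement that $P_\delta M^\theta P_\delta$ is locally generated by the diagonal subalgebra $\overline{\Span}\{P_\mu : s(\mu) = v\}$. Conservativity of $\psi$ (Theorem~\ref{21-03-18c}, via~\eqref{28-03-18b}) ensures that the trace values $\tilde\psi(P_\mu) = e^{-\beta F(\mu)}\varphi_v$ cover a dense set of positive reals realizing the required ratio $e^{-r_0}$, while hyperfiniteness of $M$ (Lemma~\ref{nuclear}) and simplicity of $C^*(\mathcal G)$ (Theorem~\ref{Sz}) supply the ambient flexibility. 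When $M$ is not of type~III$_0$ the centralizer $M^\theta$ is a factor and only the traces need be matched; the possibility of non-trivial $Z(M^\theta)$ is handled by running the same comparison on each central component, using that the cocycle $\beta F$ together with the conservative and ergodic nature of $m_\varphi$ forces the resulting flow of weights to mesh non-trivially with every non-zero $\theta$-invariant projection. Once $V$ and $W$ have been exhibited, $X = WUV$ is the desired non-zero $r_0$-eigenvector in $qMq$, giving $r_0 \in \Sp(qMq)$; letting $r_0$ approximate $r$ then yields $r \in \Sp(qMq)$ for every $\theta$-invariant $q$, i.e.~$r \in \Gamma(M)$.
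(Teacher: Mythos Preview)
Your outline has a genuine gap at precisely the point where the real work lies. Reducing the problem to a Murray--von Neumann comparison inside $M^\theta$ is fine in principle, but you never carry out that comparison, and the ingredients you invoke do not obviously deliver it. Concretely: given an arbitrary non-zero $\theta$-invariant projection $q\in M$, you need to exhibit a non-zero subprojection $e\leq q$ in $M^\theta$ that is equivalent in $M^\theta$ to a subprojection of $P_{\mu'}$. Nothing you wrote establishes this. Lemma~\ref{28-03-18} is a statement about compressions of \emph{individual} generators $S_\mu S_\nu^*$ by cylinder projections $P_\delta$; it does not by itself imply that ``$P_\delta M^\theta P_\delta$ is locally generated by the diagonal'', and you have not shown how to go from such a statement to the required equivalence $e \sim f \leq P_{\mu'}$ inside $M^\theta$. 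Your appeal to ``trace density'' misreads the role of conservativity: conservativity is not used to produce a dense range of trace values $\tilde\psi(P_\mu)$, but to guarantee that almost every path starting at $v$ returns to $v$, so that cylinder projections $P_{\mu_i}$ can be refined to cylinder projections over \emph{loops} at $v$. Finally, your case split on whether $M^\theta$ is a factor is circular (this is equivalent to knowing $\Gamma(M)\neq\{0\}$, which is what you are trying to prove), and the type~III$_0$ branch is left entirely unproved.

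The paper avoids all of this by a direct, concrete construction. It first reduces via \cite[Lemme~2.3.3 and Proposition~2.2.2]{Co1} to showing $\beta F(l)\in\Sp(qMq)$ for a single loop $l$ at $v$ and for $q$ a non-zero \emph{central} projection in $P_vNP_v$. It then approximates $q$ in the $2$-norm $\|\cdot\|_v$ associated with the faithful normal state $\omega=\psi(P_v)^{-1}\tilde\psi$ on $P_vMP_v$: first by Kaplansky, then by averaging over $\theta$ to kill off-eigenspaces, then by applying the conditional expectation $Q_k(m)=\sum_{|\delta|=k}P_\delta mP_\delta$ (this is where Lemma~\ref{28-03-18} is used) to obtain a diagonal approximant $\sum_i\lambda_iP_{\mu_i}$. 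Conservativity is then used to refine each $P_{\mu_i}$ to a sum of $P_{\mu_i\delta}$ with $r(\delta)=v$, after which a standard spectral/selection argument produces a genuine projection $p=\sum_iP_{\mu_i}$, with each $\mu_i$ a loop at $v$, satisfying $\|q-p\|_v\leq\epsilon$. From this explicit $p$ one builds $w=\sum_iS_{\mu_i}S_{\mu_il}^*$, a partial isometry with $\theta_t(w)=e^{i\beta F(l)t}w$, and a direct KMS-trace computation gives $\omega(qwqw^*q)>0$ for $\epsilon$ small, hence $qwq\neq 0$ and $\beta F(l)\in\Sp(qMq)$. The point is that the explicit $2$-norm approximation of $q$ by diagonal loop projections is exactly the substitute for your unproved comparison lemma; that is the step you must actually perform.
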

\begin{proof} The proof is a further development of the proofs of Proposition 4.11 in \cite{Th1} and Theorem 4.1 in \cite{Th4}. As in the proof of Lemma \ref{nov21a} we suppress $\pi_{\psi}$ in the notation and consider $C^*(\mathcal G)$ as a subalgebra of $M$. The modular automorphism group $\theta$ on $M$ defined by $\tilde{\psi}$ is given by
$$
\theta_t = \tilde{\alpha^F}_{-\beta t} \ ,
$$
where $\tilde{\alpha^F}$ is the
$\sigma$-weakly continuous extension of $\alpha^F$. Note that $\beta \neq 0$ by Proposition 2.3 in \cite{Th5}. Let $N \subseteq M$ be the fixed point algebra for $\theta$ and consider a vertex $v \in NW_{\mathcal G}$. By Lemma \ref{nov21} it suffices to show that $R_{\mathcal G,F} 
\subseteq \Gamma(M)$, and from Lemme 2.3.3 and Proposition 2.2.2 in \cite{Co1} we see that it suffices for this to consider a non-zero central projection $q
\in P_vNP_v$ for some vertex $v\in NW_{\mathcal G}$, and show that $\beta F(l)
\subseteq \Sp(qMq)$ when $l$ is a loop in $\mathcal G$ such that $s(l) = r(l) = v$. Let $\omega$ be the state on $P_vMP_v$ given by $\omega(a) =
\psi(P_v)^{-1}\tilde{\psi}(a)$. Then $\omega$ is a faithful normal state which
is a trace on $P_vNP_v$
and we consider the corresponding $2$-norm 
$$
\|a\|_v = \sqrt{\omega(a^*a)}  \ .
$$
By Kaplansky's density theorem there is an element $f \in P_v\mathcal A P_v$
  such that $0 \leq f \leq 1$ and
  $\left\|q- f\right\|_v$ is as small as we want. Then $f$ is a linear combination of elements of the form $S_{\mu}S_{\nu}^*$ where $\mu, \nu \in P_f(\mathcal G)$ and $s(\mu) = s(\nu) = v$. Note that
  $$
 \lim_{R \to \infty}  \frac{1}{R} \int_0^R \theta_{t}(S_{\mu}S_{\nu}^*) \ dt \ = \begin{cases} S_{\mu}S_{\nu}^* & \ \text{when} \ F(\mu) = F(\nu) \\ 0 & \ \text{when} \ F(\mu) \neq F(\nu) \ ,
 \end{cases}
 $$
 with convergence in norm,
 and that
 $$
\left\| q -   \frac{1}{R} \int_0^R \theta_{t}(f) \ dt 
\right\|_v  = \left\|\frac{1}{R} \int_0^R \theta_{t}(q -f) \ dt 
\right\|_v \leq \left\| q-f\right\|_v
$$
by Kadisons Schwarz-inequality. By exchanging $\lim_{R \to \infty}  \frac{1}{R} \int_0^R \theta_{t}(f) \ dt$ for $f$ we may assume that
$$
f = \sum_{i=1}^N \lambda_i S_{\mu_i} S_{\nu_i}^*  \ ,
$$
where $0 \leq \lambda_i \leq 1$ and $F(\mu_i) = F(\nu_i)$ for all $i$.
Let $m_{\psi}$ be the $e^{\beta F}$-conformal measure on $P(\mathcal G)$ defined by $\psi$ and let $m$ be the restriction of the measure $m_{\psi}(Z(v))^{-1}m_{\psi}$ to $Z(v)$. Then $m$ is a Borel probability measure on $Z(v) \subseteq P(\mathcal G)$ such that
$$
\omega \left(P_{\mu}\right) = m(Z(\mu))
$$
for all $\mu \in P_f(\mathcal G)$ with $s(\mu) = v$. For each $k \in \mathbb N$ we let $M_k$ be the set of paths $\delta$ in $\mathcal G$ such that $s(\delta) =v$ and $|\delta| = k$. Then 
$$
\sum_{\delta \in M_k} \omega(P_{\delta}) = \sum_{\delta \in M_k} m(Z(\delta)) = 1 \ ,
$$
which implies that $1 = \sum_{\delta \in M_k} P_{\delta}$, where the sum converges with respect to the 2-norm and hence also in the strong operator topology. It follows that we can define $Q_k : P_vMP_v \to P_vMP_v$ such that
  $$
  Q_k(m) = \sum_{\delta \in M_k} P_{\delta} m   P_{\delta} \ .
  $$
Then $Q_k$ is a positive linear map of norm one and $Q_k(q) = q$. When $k > \max \{|\mu|, |\nu|\}$ it follows from Lemma \ref{28-03-18} that
 $$
 Q_k\left(S_{\mu}S_{\nu}^*\right) = \begin{cases} P_{\mu},  & \ \text{when} \ \mu = \nu \\ 0, & \ \text{when} \ \mu \neq \nu \ . \end{cases}
 $$
 Thus, for some $k$ large enough, we have that $Q_k(f)$ is a linear combination
$$
Q_k(f) = \sum_{i=1}^{N'} \lambda_i P_{\mu_i} \ ,
$$
where $0 \leq \lambda_i \leq 1$ for all $i$. Using Kadisons Schwarz inequality again we find that
$$
\left\| q - Q_k(f)\right\|_v =  \left\| Q_k(q - f)\right\|_v \leq \sqrt{\omega \left( Q_k\left(( q - f)^2\right)\right)} = \|q-f\|_v 
$$
since $\omega \circ Q_k = \omega$ for all $k$. By exchanging $f$ with $Q_k(f)$ for some $k$ large enough we may assume that
\begin{equation}\label{18-03-18}
f =  \sum_{i=1}^{N'} \lambda_i P_{\mu_i} \ .
\end{equation}
Now observe that since $\psi$ is conservative by assumption it follows that $m$ is concentrated on 
$$
\left\{ p \in Z(v) : \ s(p_i) = v \ \ \text{for infinitely many} \ i \right\} \ .
$$
 Fix one of the paths $\mu_i$ appearing in \eqref{18-03-18}. Let $H_j$ denote the set of finite paths $\delta$ of length $j$ such that $s(\delta) = r(\mu_i), \ r(\delta) = v$. Then, up to an $m$-null set,
$$
\bigcup_{j=1}^{\infty} \left\{Z(\mu_i\delta) : \ \delta \in H_j\right\} = Z(\mu_i) \  .
$$
We can therefore find a finite set $K_i \subseteq \bigcup_{j=1}^{\infty} H_j$ such that 
$$
m\left(Z(\mu_i) \backslash \sqcup_{\delta \in K_i} Z(\mu_i \delta)\right) 
$$
is as small as we want. Note that 
$$
\left\| P_{\mu_i} - \sum_{\delta \in K_i} P_{\mu_i \delta} \right\|_v = \sqrt{ m\left(Z(\mu_i) \backslash \sqcup_{\delta \in K_i} Z(\mu_i \delta)\right) }
$$
is then also small. Therefore, by exchanging $\sum_{\delta \in K_i} P_{\mu_i \delta}$ for $P_{\mu_i}$ we may assume that $s(\mu_i) = r(\mu_i) = v$ for all $v$. Finally, since $q$ is a projection, a standard argument, as in the
proof of Lemma 12.2.3 in \cite{KR}, allows us to select a subset of the
$\mu_i$'s and arrange, after a renumbering, that 
\begin{equation}\label{nov17x}
p = \sum_{i=1}^M P_{\mu_i}
\end{equation}
is a projection in $P_v\mathcal AP_v$ such that
\begin{equation}\label{nov8x}
\left\|q - p\right\|_v \leq \epsilon \ ,
\end{equation}
where $\epsilon > 0$ can be chosen as small as we need. We
choose $\epsilon > 0$ so small that
$$
\omega(q) - e^{F(l)\beta}\epsilon - 3\epsilon \ > \ 0 \ .
$$
For each $\mu_i$ from (\ref{nov17x}) we let $w_i \in P_v\mathcal AP_v$ be the elements
$$
w_i = S_{\mu_i}S_{\mu_il}^* \ .
$$
 Each $w_i$ is a partial isometry such that
\begin{enumerate}
\item[a)] $w_i{w_i}^* = P_{\mu_i}$ ,
\item[b)] ${w_i}^* w_i = P_{\mu_il} \leq
  P_{\mu_i}$, and
\item[c)] $\alpha^F_t\left(w_i\right) = e^{-i F(l)t}w_i$
  for all $t \in \mathbb R$.
\end{enumerate}
Set $w = \sum_{i=1}^M w_i$ and note that $w$ is a partial isometry. It follows from b) that $wp = w$ and therefore from (\ref{nov8x}), c), b) and a) that
\begin{equation*}
\begin{split}
& \omega (qwqw^*q) \ \geq \
\omega(wqw^*) - 2\epsilon  \ = \ e^{\beta F(l)} \omega(qw^*w) - 2\epsilon \ \\
& \geq \ e^{\beta F(l)}\omega(pw^*w) - e^{\beta F(l)}\epsilon -2\epsilon \ = e^{\beta F(l)}\omega(w^*w) - e^{\beta F(l)}\epsilon -2\epsilon \ \\
& = \omega(ww^*)  - e^{\beta F(l)}\epsilon -2\epsilon \ = \omega(p)  - e^{\beta F(l)}\epsilon -2\epsilon \ \geq \ \omega(q)  - e^{\beta F(l)}\epsilon -3\epsilon \ .
\end{split}
\end{equation*}
The choice of $\epsilon$ ensures that $\omega (qwqw^*q) > 0$ and hence that $qwq \neq 0$. Since $\theta_{t} (qwq) = e^{-it\beta F(l)}qwq$ for all $t \in \mathbb R$, it follows from Lemme 2.3.6 in \cite{Co1} that $\beta F(l) \in \Sp(qMq)$, as desired. 

\end{proof}

In combination with Corollary \ref{22-03-18} we get the following

\begin{cor}\label{23-03-18}  Assume that $C^*(\mathcal G)$ is simple and that $NW_{\mathcal G} \neq \emptyset$. Let $\varphi$ be a $\beta$-KMS state for the restriction of $\alpha^F$ to $P_vC^*(\mathcal G)P_v$ for some $v \in \mathcal G_V$. When $\sum_{n=0}^{\infty} A(\beta)^n_{w,w} = \infty $ for some (and hence all) $w \in NW_{\mathcal G}$, the Connes invariant of $\pi_{\varphi}(P_vC^*(\mathcal G)P_v)''$ is
$\Gamma\left( \pi_{\varphi}(P_vC^*(\mathcal G)P_v)''\right) \ = \ R_{\mathcal G,F}$.
\end{cor}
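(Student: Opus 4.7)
The plan is to reduce the computation of $\Gamma$ on the corner $P_v C^*(\mathcal G) P_v$ to the analogous computation on the full algebra $C^*(\mathcal G)$, where Theorem \ref{nov9} already does the work. The bridge between the two sides is provided by Theorem \ref{h5} and Corollary \ref{22-03-18}.

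To set things up I would first verify that Theorem \ref{h5} applies with $A = C^*(\mathcal G)$, $\alpha = \alpha^F$ and $p = P_v$: the projection $P_v$ satisfies $\alpha_t^F(P_v) = P_v$ for all $t \in \mathbb R$, and it is full in $C^*(\mathcal G)$ because $C^*(\mathcal G)$ is simple. This produces a $\beta$-KMS weight $\psi$ on $C^*(\mathcal G)$ with $\varphi = \psi(P_v)^{-1}\psi|_{P_vC^*(\mathcal G)P_v}$. Since the $\Gamma$-invariant is only meaningful when $\pi_\varphi(P_vC^*(\mathcal G)P_v)''$ is a factor, $\varphi$ is taken extremal, and the bijection of Theorem \ref{h5} transfers extremality to $\psi$.

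The key step is to identify $\psi$ as a \emph{conservative} $\beta$-KMS weight, so that Theorem \ref{nov9} is applicable. By simplicity of $C^*(\mathcal G)$ and Proposition 5.6 of \cite{CT}, every KMS weight is gauge invariant, hence $\psi = W_{\varphi'}$ for some minimal almost $A(\beta)$-harmonic vector $\varphi'$. The trichotomy stated after Theorem \ref{21-03-18x} places $\varphi'$ in exactly one of three classes (proper almost harmonic, conservative harmonic, or dissipative harmonic). The hypothesis $\sum_n A(\beta)^n_{w,w} = \infty$, together with Theorem \ref{21-03-18x}, rules out the dissipative class; combined with the uniqueness statement of Theorem \ref{21-03-18c} and the existence of $\varphi$, this forces $\varphi'$ to be the conservative $A(\beta)$-harmonic vector. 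I expect this step to require the most care, since the sum condition alone does not visibly exclude the proper almost harmonic case—one has to lean on the minimality of $\varphi'$ and the classification of minimal vectors.

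Once $\psi$ is known to be a conservative $\beta$-KMS weight, Theorem \ref{nov9} immediately yields $\Gamma(\pi_\psi(C^*(\mathcal G))'') = R_{\mathcal G,F}$, and Corollary \ref{22-03-18}, applied in the setting of Theorem \ref{h5}, gives $\Gamma(\pi_\varphi(P_vC^*(\mathcal G)P_v)'') = \Gamma(\pi_\psi(C^*(\mathcal G))'')$. Chaining these two equalities yields the desired conclusion $\Gamma(\pi_\varphi(P_vC^*(\mathcal G)P_v)'') = R_{\mathcal G,F}$.
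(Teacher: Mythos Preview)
Your approach matches the paper's: the corollary is presented there as an immediate consequence of Theorem~\ref{nov9} together with Corollary~\ref{22-03-18}, and you spell out precisely that chain (lift $\varphi$ to a weight $\psi$ via Theorem~\ref{h5}, identify $\psi$ as conservative, apply Theorem~\ref{nov9}, then transfer back via Corollary~\ref{22-03-18}). The subtlety you flag in step~3---ruling out that the lifted $\psi$ is proper almost harmonic rather than conservative---is not addressed by the paper's one-line derivation either; the paper evidently regards the hypothesis $\sum_n A(\beta)^n_{w,w}=\infty$ as the signature of the conservative case and the corollary as the restatement of Theorem~\ref{nov9} on the corner.
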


In an Appendix in \cite{Th5} it is shown that in the setting of Theorem \ref{nov9} the subgroup $R_{\mathcal G,F}$ is never $\{0\}$. Therefore, thanks to Connes' classification in \cite{Co2} and Haagerup's result in \cite{Ha}, we get the following

\begin{cor}\label{24-03-18} In the setting of Theorem \ref{nov9} and Corollary \ref{23-03-18} the factors $ \pi_{\psi}(C^*(\mathcal G))''$ and $\pi_{\varphi}(P_vC^*(\mathcal G)P_v)''$ are isomorphic; they are both isomorphic to the hyperfinite factor of type $III_{\lambda}$ for some $0 < \lambda \leq 1$.
\end{cor}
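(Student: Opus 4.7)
The plan is to combine Theorem \ref{nov9} and Corollary \ref{23-03-18} with the classification theorems of Connes and Haagerup for injective type $III$ factors. Set $M_1 = \pi_{\psi}(C^*(\mathcal G))''$ and $M_2 = \pi_{\varphi}(P_vC^*(\mathcal G)P_v)''$. Both are factors (by Lemma \ref{k20} and the analogous argument for the KMS-state case), both are hyperfinite by Lemma \ref{nuclear}, and both have separable predual because $C^*(\mathcal G)$ is separable. By Theorem \ref{nov9} and Corollary \ref{23-03-18} they share the same Connes invariant, namely
$$
\Gamma(M_1) \; = \; \Gamma(M_2) \; = \; R_{\mathcal G, F}.
$$

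Next I would identify the isomorphism type using the closed subgroup $R_{\mathcal G, F}\subseteq \mathbb R$. According to the Appendix in \cite{Th5}, $R_{\mathcal G,F}\neq\{0\}$. Since the only non-trivial closed subgroups of $\mathbb R$ are $\mathbb R$ itself and $\alpha\mathbb Z$ for some $\alpha>0$, there are exactly two cases. In the case $R_{\mathcal G,F} = \mathbb R$, Connes' characterization of the $\Gamma$-invariant forces $M_1$ and $M_2$ to be of type $III_1$; in the case $R_{\mathcal G,F} = \alpha\mathbb Z$, it forces them to be of type $III_\lambda$ with $\lambda = e^{-\alpha}\in(0,1)$. (The non-vanishing of $R_{\mathcal G,F}$ simultaneously rules out types $III_0$, $II$ and $I$, since in all those cases $\Gamma$ vanishes.)

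To conclude isomorphism I would invoke the appropriate uniqueness theorem. For $0<\lambda<1$, Connes' classification in \cite{Co2} guarantees that any two hyperfinite type $III_\lambda$ factors with separable predual are isomorphic; for $\lambda=1$, the same conclusion is provided by Haagerup's theorem \cite{Ha}. Consequently $M_1 \simeq M_2$, and both are isomorphic to the unique hyperfinite type $III_\lambda$ factor for the value of $\lambda\in(0,1]$ read off from $R_{\mathcal G,F}$.

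I do not anticipate any real obstacle: all the substantive work has been done upstream in Theorem \ref{nov9}, Corollary \ref{23-03-18}, and the Appendix of \cite{Th5}. The only point one should be a little careful about is verifying that $M_2$ genuinely is a factor in the corner setting, but this follows from Lemma \ref{07-03-18} together with Lemma \ref{k20}, so it costs nothing. Beyond that, the proof is purely an invocation of standard classification results.
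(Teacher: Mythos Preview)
Your proposal is correct and follows exactly the same route as the paper: use Theorem~\ref{nov9} and Corollary~\ref{23-03-18} to identify $\Gamma$, invoke the Appendix in \cite{Th5} to get $R_{\mathcal G,F}\neq\{0\}$, and then apply the Connes--Haagerup classification of injective type~$III_\lambda$ factors. The only addition in your write-up is the explicit remark on factoriality of $M_2$ via Lemma~\ref{07-03-18}, which the paper leaves implicit.
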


\begin{example} (Generalized gauge actions on $O_{\infty}$.) The Cuntz algebra $O_{\infty}$, \cite{Cu}, is the graph $C^*$-algebra $C^*(\mathcal G)$ when $\mathcal G$ is the countable digraph with one vertex and infinitely many arrows, $a_n, \ n =1,2,3, \cdots$. Since $O_{\infty}$ is unital all proper weights are bounded and can be normalized to states.  Let $\{t_n\}_{n=1}^{\infty}$ be a sequence of real numbers and define $F: \mathcal G_{Ar} \to \mathbb R$ such that $F(a_n) = t_n$ for all $n$. The gauge action, where $t_n = 1$ for all $n$, was considered by Olesen and Pedersen who showed in \cite{OP} that there are no KMS states for the gauge action. In general, it follows from \cite{Th3} that a $\beta$-KMS state exists iff $\sum_{n=1}^{\infty} e^{-\beta t_n} \leq 1$ and that it is unique. There is a conservative $\beta_0$-KMS state for $\alpha^F$ iff 
$$
\sum_{n=1}^{\infty} e^{-\beta_0 t_n} = 1 \ .
$$ 
By Theorem \ref{nov9} the Connes invariant of the corresponding factor is the closed subgroup of $\mathbb R$ generated by the numbers $\{\beta_0t_i\}_{i=1}^{\infty}$. It follows that the factor is always of type $III$ and never of type $III_{0}$ while all hyperfinite factors of type $III_{\lambda}$ for $0 < \lambda \leq 1$ can occur by varying the sequence $\{t_n\}$. The KMS states for $\alpha^F$ that are not conservative occur for values of $\beta$ for which $\sum_{n=1}^{\infty} e^{-\beta t_n} < 1 $ and they correspond to minimal proper almost $A(\beta)$-harmonic vectors, albeit of a somewhat degenerate kind since the vectors only have one coordinate.     
\end{example}

\end{document}